\crefname{hypothesis}{Hypothesis}{Hypotheses}
\setlist[enumerate]{leftmargin=.5in}
\setlist[itemize]{leftmargin=.5in}
\crefname{hypothesis}{Hypothesis}{Hypotheses}
\DeclareMathOperator{\Diag}{diag}
\DeclareMathOperator{\Rank}{rank}
\DeclareMathOperator{\Trace}{tr}
\DeclareMathOperator{\Range}{\mathcal{R}}
\newcommand{\imagUnit}{\mathrm{i}}
\newcommand{\eConst}{\mathrm{e}}
\newcommand{\norm}[2][2]{\Vert #2 \Vert_{#1}}
\newcommand{\abs}[1]{\vert #1 \vert}
\title{A Chebyshev--Jackson series based block SS--RR algorithm for computing partial
eigenpairs of real symmetric matrices\thanks{Submitted to the editors DATE.
\funding{The work was supported by the National Natural Science Foundation of China (NSFC) under
grants 12171273 and 12571404.}}}
\author{Zhongxiao Jia\thanks{Corresponding author. Department of Mathematical Sciences, Tsinghua University, 100084 Beijing, China
  (\email{jiazx@tsinghua.edu.cn}).}
\and Tianhang Liu\thanks{Department of Mathematical Sciences, Tsinghua University, 100084 Beijing, China
  (\email{lth21@mails.tsinghua.edu.cn}).}
}
\begin{document}
\maketitle

\begin{abstract}
This paper considers eigenpair computations of large symmetric matrices with the desired
eigenvalues lying in a given interval using the contour integral-based block SS--RR method, a Rayleigh--Ritz
projection onto a certain subspace generated by moment matrices. Instead of using
a numerical quadrature to approximately
compute the moments by solving a number of large shifted complex linear
systems at each iteration, we make use of the Chebyshev--Jackson (CJ) series expansion to
approximate the moments, which only involves matrix-vector products and avoids expensive
solutions of the linear systems. We prove that the CJ series expansions pointwise
converge to the moments as the series degree increases, but at different convergence rates
depending on point positions and moment orders. These extend the available convergence results
on the zeroth moment of CJ series expansions to higher order ones. Based on the results established,
we develop a CJ--SS--RR algorithm. Numerical experiments illustrate that the new algorithm is more
efficient than the contour integral-based block SS--RR algorithm with the
trapezoidal rule.

\end{abstract}

\begin{keywords}
CJ series expansion, real symmetric matrix, eigenvalue, eigenvector, contour integral, moments, pointwise convergence, convergence rate
\end{keywords}

\begin{AMS}
65F15, 15A18, 65F10, 41A10
\end{AMS}

\section{Introduction}
The problem of computing a number of specific eigenvalues of a large matrix is common in vast scientific and engineering applications.
In this paper, we consider the numerical solution of the following symmetric eigenvalue problem $ A x = \lambda x$ with $A \in \mathbb{R}^{n \times n}$: Given a real interval $[a, b]$ contained in the spectrum interval of $A$, determine all the $n_{ev}$ eigenpairs $(\lambda, x)$ with the eigenvalues $\lambda\in [a, b]$ counting multiplicities. Such kind of eigenvalue problems arises from numerous applications, e.g., dynamic analysis of structures \cite{block_Lanczos}, linear response eigenvalue problems in quantum chemistry \cite{LSEP}, and the density functional theory based electronic structure calculations \cite{Saad_Eigen}, to mention only a few.

Over the past two decades, starting with the initial
the Sakurai--Sugiura (SS) method \cite{SS--Hankel}, called the SS--Hankel method, and the FEAST eigensolver \cite{FEAST2009}, a new class of numerical methods that are based on contour integration and rational filtering has emerged for computing the eigenvalues of a large matrix in a given region and the associated eigenvectors or invariant subspaces.  The SS--Hankel method uses certain complex moment matrices constructed by a
contour integral with the contour containing the region of interest that the eigenvalues lie in, but it and its
block variant \cite{SS_theory2010} are numerically unstable because of the ill-conditioning of
Hankel matrices \cite{beckermann2000}.
To improve numerical stability and compute eigenpairs more accurately and reliably,
based on the Rayleigh--Ritz projection,
the SS--RR method \cite{SS--RR} and its block variant \cite{block_SS--RR}
have been proposed; the block SS--Arnoldi method based on the block Arnoldi process
has also been presented in \cite{SS--Arnoldi}, and it is mathematically equivalent to
the SS-RR method when the moments are exactly computed.
The FEAST eigensolver,
initially introduced by Polizzi \cite{FEAST2009} in 2009 for Hermitian eigenvalue problems,
only computes an approximation to
the zeroth moment, which is the exact spectral projector corresponding to the eigenvalues
inside the interval of interest. It works on subspaces of a fix dimension, and
uses subspace iteration \cite{Golub_Matrix,Parlett_Symmetric,Saad_Eigen,Stewart_Eigen} on an approximate spectral projector to generate a sequence of subspaces \cite{FEASTasSI}, onto which the Rayleigh--Ritz projection is realized. The FEAST eigensolver has been generalized to non-Hermitian matrices \cite{FEAST_non_Hermitian,FEAST_oblique}, and
it can be regarded
as a restarted variant of the block SS--RR method, in which only the zeroth moment is required and
the subspace dimension is no smaller than the number $n_{ev}$ of the desired eigenvalues.

In the SS eigensolver \cite{SS_package} and FEAST eigensolver \cite{FEAST_package},
one needs to use
some suitably chosen numerical quadratures to approximate the zeroth to higher order
moment matrices and the zeroth order moment matrix, respectively, which
requires the solutions of a number of large shifted complex linear systems at each
iteration, where the shifts are the quadrature nodes. Besides, several
researchers have proposed different rational approximation or filtering methods \cite{FEAST_rational_filter,FEAST_Zolotarev,FEAST_LS_rational}, which also require to solve
a number of large shifted linear systems.
These linear systems are generally solved by iterative methods or solved by a direct solver, e.g.,
the sparse LU factorizations \cite{Golub_Matrix,Saad_Linear} if they are computationally feasible.
For truly large matrices, if they are structured, e.g., banded, then the sparse LU factorizations
are preferable and can solve the shifted linear systems very efficiently.
But if they are generally sparse or dense, the storage requirement and computational cost
of the LU factorizations is
prohibitive, and we have no way but use an iterative method, e.g., a Krylov subspace iterative method
such as BiCGStab method \cite{Saad_Linear}, to solve the linear systems.
We suppose that the shifted linear systems are solved iteratively throughout this paper.
However, notice that these shifted linear systems are highly indefinite when
the region of the interest is inside the spectrum of $A$.
It is well known that, for highly indefinite linear systems, Krylov subspace solvers
are generally inefficient and can be too slow. Furthermore, it lacks a
general effective preconditioning technique for highly indefinite linear systems.

Noticing that the contour integral corresponds to a specific step or piecewise continuous function, Jia and Zhang have proposed a new FEAST SVDsolver based on Chebyshev--Jackson (CJ) series expansion \cite{CJ-FEAST-cross,CJ-FEAST-augmented};
instead of numerical quadrature or any
rational filtering, they make use of the polynomial CJ-series expansion to
construct an approximation to the step function and obtain an approximation to the underlying
spectral projector without solving any linear system.
The resulting CJ-FEAST solver is directly adaptable to real symmetric eigenvalue problems.
They have established the pointwise convergence, error estimates of the CJ series, and accuracy estimates for the approximate spectral projector.
The results show that the convergence of CJ polynomial series expansion
is uniquely determined by the distances between the interval ends and the closest eigenvalues inside
and outside the interval and is independent of the interval location in the spectrum of $A$.
By contrast, iterative solutions of the shifted linear systems in numerical quadratures and other rational approximations critically depends on both the interval location and the distances between the ends and the
closest eigenvalues inside and outside, which determine the degrees of indefiniteness and ill conditioning of the
shifted linear systems, respectively, and affect the convergence of Krylov solvers.

In this paper, following CJ series expansion approximations \cite{CJ-FEAST-cross,CJ-FEAST-augmented}
to the step function and the zeroth order moment, we nontrivially extend them to those higher order moments
in the block SS--RR methods, and propose a new block SS--RR method, called the CJ--SS--RR method.
We prove that the CJ series expansion pointwise converges to the moments as the series degree increases, but convergence rates depend on point positions and moment orders.
Given the property that the subspace generated in the block SS--RR method is a block Krylov subspace \cite{SS_theory2010,SS_review} when the moments are exactly computed,
we extend the results of the block Krylov subspace in \cite{jia_block,Stewart_Eigen} to the CJ--SS--RR method, and establish the convergence results that are different from those of the SS-type methods in \cite{SS_theory2010,SS_theory2016,SS_review}.
These results show that, just as the CJ-FEAST SVDsolver, the approximation accuracy of the CJ series expansion does not depend on the interval location in the spectrum of $A$.
We consider the practical choice of series degree and the issue of more reliable computation of orthonormal
basis of a given search subspace by replacing its possibly ill conditioned basis matrix
formed by the monomial moments with a better conditioned
basis matrix formed by the better conditioned modified Chebyshev moments, such that
the subspace generated by the computed orthonormal basis is a more accurate approximation to
the true search subspace, thereby retaining more effective information and improving
the computational accuracy. Finally,
we will report numerical experiments to demonstrate the effectiveness and efficiency of the CJ--SS--RR method;
in the meantime, we illustrate that the new method
is much more efficient than the contour integral-based block SS--RR method using the numerical quadrature with the trapezoidal rule.

The paper is organized as follows.
In \cref{sec: SS--RR introduction}, we briefly describe the block SS--RR method and its theoretical background.
In \cref{sec: CJ series}, we establish compact quantitative pointwise convergence results on the CJ series expansion.
Then we develop a CJ--SS--RR method in \cref{sec: approximation and convergence} and analyze its convergence.
In \cref{sec: CJ--SS--RR algorithm}, we present a complete CJ--SS--RR method with several practical issues
addressed.
In \cref{sec: numerical experiments}, we report numerical experiments to illustrate the performance of the CJ--SS--RR method and to compare it with the contour integral-based block SS--RR method using the trapezoidal rule.
Finally, we conclude the paper in \cref{sec: conclusion}.

Throughout this paper, denote by $\norm{\cdot}$ the 2-norm of a vector or matrix, by $\mathcal{R}(X)$ the column space of a matrix $X$, by $\lambda(A)$ the set of all the
eigenvalues of $A$, and by $\lambda_{\min}$
and $\lambda_{\max}$ the minimum and maximum eigenvalues of $A$, respectively.
We denote by $(\lambda_i, x_i), i = 1, \ldots, n$ the eigenpairs of $A$ with $\norm{x_i} = 1$, label the eigenvalues in the interval $[a,b]$ as
$b \geq \lambda_1 \geq \cdots \geq \lambda_{n_{ev}} \geq a$, and write $X_{n_{ev}}=(x_1, \ldots, x_{n_{ev}})$, $\Lambda_{n_{ev}}=\Diag\{\lambda_1, \ldots, \lambda_{n_{ev}}\}$ and $\mathcal{X}_{n_{ev}}=\mathcal{R}(X_{n_{ev}})$.

\section{The block SS--RR method}
\label{sec: SS--RR introduction}

Define $\Omega = \{ \vert z-(a+b)/2 \vert \leq (b-a)/2 \} \subset \mathbb{C}$ to be the circle that
contains $[a, b]$ and goes through $a$ and $b$.
Given two positive integers $M, \ell$ and a matrix $V \in \mathbb{R}^{n \times \ell}$ of full column rank,
define the moment matrix
\begin{equation}\label{eq: contour integral of S}
  \widehat{S} = (\widehat{S}_0, \widehat{S}_1, \ldots, \widehat{S}_{M-1})
\end{equation}
with the monomial moments
\begin{equation} \label{moment}
\widehat{S}_k = \frac{1}{2 \pi \imagUnit} \oint_{\partial \Omega} z^k (zI-A)^{-1} V {\rm d} z,\ k=0, 1, \ldots, M-1,
\end{equation}
where $\partial \Omega$ is the positively-oriented boundary of $\Omega$.
The block SS--RR method \cite{block_SS--RR} is the Rayleigh--Ritz projection onto the subspace $\mathcal{R}(\widehat{S})$
and computes the Ritz pairs with respect to the subspace.
Since the exact $\widehat{S}$ is not available, the block SS--RR method uses a numerical quadrature to compute the integrals approximately:
\begin{equation} \label{eq: numerical quadrature}
  \widehat{S}_k \approx S_k:=\sum_{j=1}^q w_j z_j^k (z_jI-A)^{-1} V, \quad k=0, 1, \ldots, M-1,
\end{equation}
where the $w_j$ and $z_j$ are the quadrature weights and nodes, respectively. Therefore,
\begin{equation}\label{apprS}
S=(S_0, S_1, \ldots, S_{M-1})
\end{equation}
is an approximation to $\widehat{S}$ in \eqref{eq: contour integral of S}.

Next we review some basic results and provide some preliminaries.
For brevity, suppose
$[\lambda_{\min}, \lambda_{\max}] =[-1, 1]$, and let $[a,b] \subset [\lambda_{\min}, \lambda_{\max}] $;
otherwise, we use the linear transformation
\begin{equation}\label{eq: linear transformation}
  l(t) = \frac{2t-\lambda_{\max} - \lambda_{\min}}{\lambda_{\max} - \lambda_{\min}}
\end{equation}
to map $[\lambda_{\min}, \lambda_{\max}]$ to $[-1, 1]$.
Define the step function
\begin{displaymath} 
  h(t) = \begin{cases}
    1,           & t \in (a, b) , \\
    \frac{1}{2}, & t \in \{ a, b \}, \\
    0,           & t \in [-1, 1] \setminus [a, b],
  \end{cases}
\end{displaymath}
where $a$ and $b$ are the discontinuity points of $h(t)$, and the values of $h(t)$ at $a$ and $b$
equal the means of respective right and left limits:
\begin{displaymath}
  \frac{1}{2} \left( h(a+0) + h(a-0) \right) = \frac{1}{2} \left( h(b+0) + h(b-0) \right) = h(a) = h(b) = \frac{1}{2}.
\end{displaymath}
The Cauchy integral formula tells us
\begin{displaymath}
  \frac{1}{2 \pi \imagUnit} \oint_{\partial \Omega} z^k (z-t)^{-1} {\rm d} z = \begin{cases}
    t^k, & t \in (a, b), \\
    0,   &t \in [-1, 1] \setminus [a, b].
  \end{cases}
\end{displaymath}
Thus, according to \cite[Theorem 4]{SS_theory2010}, we have
\begin{equation} \label{eq: S_k and R(S)}
  \widehat{S}_k = A^k\widehat{S}_0 = A^kh(A)V, \quad \mathcal{R}(\widehat{S}) = \mathcal{K}_{M}(A, h(A)V),
\end{equation}
where $\mathcal{K}_{M}(A, h(A)V)$ is the block Krylov subspace generated by $A$ and $h(A)V$.

Since $h(A) = X_{n_{ev}}h(\Lambda_{n_{ev}})X_{n_{ev}}^T$, we can write the matrix $\widehat{S}$ defined in \cref{eq: contour integral of S} as
\begin{displaymath} 
  \widehat{S} = K_M(A, h(A)V) = X_{n_{ev}}K_M(\Lambda_{n_{ev}}, h(\Lambda_{n_{ev}})X_{n_{ev}}^TV),
\end{displaymath}
where $K_M(A, V) = (V, AV, \ldots, A^{M-1}V)$ denotes the block Krylov matrix generated by
$A$ and the starting block $V$.
The following theorem (cf. \cite[Theorem 1]{SS_theory2016}) shows that the block SS--RR method onto $\mathcal{R}(\widehat{S})$ solves the eigenvalue problem under consideration exactly.

\begin{theorem}\label{thm: SS--RR exact}
  Let $V\in\mathbb{R}^{n \times \ell}$. Then
  \begin{displaymath}
    \mathcal{R}(\widehat{S}) \subseteq \mathcal{X}_{n_{ev}},
  \end{displaymath}
  and the equality is attained if and only if $\Rank(\widehat{S})=n_{ev}$.
\end{theorem}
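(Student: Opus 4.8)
The plan is to read everything off the spectral factorization of $\widehat{S}$ that is already recorded just before the statement, namely
\begin{displaymath}
  \widehat{S} = X_{n_{ev}} K_M\bigl(\Lambda_{n_{ev}}, h(\Lambda_{n_{ev}}) X_{n_{ev}}^T V\bigr),
\end{displaymath}
which itself comes from $\widehat{S}_k = A^k h(A)V$ in \eqref{eq: S_k and R(S)} together with $h(A) = X_{n_{ev}} h(\Lambda_{n_{ev}}) X_{n_{ev}}^T$, and then to close the argument with an elementary dimension count. First I would establish the inclusion: by the displayed identity every column of $\widehat{S}$ is a linear combination of the columns of $X_{n_{ev}}$, hence lies in $\mathcal{R}(X_{n_{ev}}) = \mathcal{X}_{n_{ev}}$, so $\mathcal{R}(\widehat{S}) \subseteq \mathcal{X}_{n_{ev}}$.

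For the equality criterion I would argue by dimensions. Since $X_{n_{ev}}$ is the eigenvector matrix associated with the $n_{ev}$ eigenvalues of the symmetric matrix $A$ lying in $[a,b]$, it has full column rank $n_{ev}$, so $\dim \mathcal{X}_{n_{ev}} = \Rank(X_{n_{ev}}) = n_{ev}$; moreover left multiplication by a full-column-rank matrix preserves rank, whence
\begin{displaymath}
  \Rank(\widehat{S}) = \Rank\bigl(K_M(\Lambda_{n_{ev}}, h(\Lambda_{n_{ev}}) X_{n_{ev}}^T V)\bigr) \le n_{ev},
\end{displaymath}
the bound holding because the block Krylov matrix on the right has only $n_{ev}$ rows. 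Combining the inclusion with the elementary fact that a subspace of an $n_{ev}$-dimensional space equals the whole space exactly when it is itself $n_{ev}$-dimensional, I conclude that $\mathcal{R}(\widehat{S}) = \mathcal{X}_{n_{ev}}$ if and only if $\Rank(\widehat{S}) = n_{ev}$, which is the assertion.

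I do not expect any genuine obstacle: once the factorization $\widehat{S} = X_{n_{ev}} K_M(\cdots)$ is in hand the proof is pure linear-algebraic bookkeeping. The only point worth a remark is that $h(\Lambda_{n_{ev}})$ is a positive diagonal matrix (its diagonal entries are $1$ at interior eigenvalues and $\tfrac12$ at any eigenvalue equal to $a$ or $b$), hence nonsingular; this is not needed for the statement as written, but it is what lets one replace the ``filtered'' Krylov matrix by the plain $K_M(\Lambda_{n_{ev}}, X_{n_{ev}}^T V)$ in the rank identity above, which is the natural companion remark to \cref{thm: SS--RR exact}.
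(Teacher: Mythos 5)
Your proof is correct and follows exactly the route the paper intends: the paper gives no proof of its own (it cites \cite[Theorem 1]{SS_theory2016}), but the factorization $\widehat{S} = X_{n_{ev}}K_M(\Lambda_{n_{ev}}, h(\Lambda_{n_{ev}})X_{n_{ev}}^TV)$ displayed immediately before the statement is precisely what you use, and your dimension count (inclusion in $\mathcal{X}_{n_{ev}}$, $\dim\mathcal{X}_{n_{ev}}=n_{ev}$, equality of a subspace with the ambient space iff the dimensions agree) is the standard completion. Your closing remark that $h(\Lambda_{n_{ev}})$ is nonsingular and commutes with $\Lambda_{n_{ev}}$, so the filtered Krylov matrix has the same rank as $K_M(\Lambda_{n_{ev}}, X_{n_{ev}}^TV)$, is also correct and consistent with the rank discussion following the theorem in the paper.
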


The condition $\Rank(\widehat{S})=n_{ev}$ is equivalent to
\begin{displaymath}
  \Rank\left( K_M(\Lambda_{n_{ev}}, h(\Lambda_{n_{ev}})X_{n_{ev}}^TV) \right) = n_{ev},
\end{displaymath}
which indicates that $M\ell \geq M \cdot \Rank(X_{n_{ev}}^TV) \geq n_{ev}$ and requires that no
multiplicity of $\lambda_i\in [a, b]$ be greater than $\ell$, a necessary condition for a block Krylov to determine the multiplicities of desired eigenvalues \cite{jia_block}.

Particularly, for $V=v \in \mathbb{R}^n$, suppose that $\widehat{S}$ is of column full rank. Then \begin{equation}\label{bkrylov}
  \mathcal{R}(\widehat{S}) = \mathcal{K}_M(A, h(A)v) = \{ p(A)h(A)v \mid p \in \mathcal{P}_{M-1} \},
\end{equation}
where $\mathcal{P}_{M-1}$ is the set of real polynomials of degree not exceeding $M-1$.

\section{The CJ series expansion}
\label{sec: CJ series}

Let $p \in \mathcal{P}_{M-1}$ be a real polynomial, and define the operator $F_d$ as a mapping from $p$ to an approximate expansion of $\tilde{h}(t) = p(t)h(t)$ obtained by the $d$-degree CJ polynomial series expansion:
\begin{equation} \label{eq: F_d}
  \tilde{h}(t) = p(t)h(t) \approx F_d(p)(t) := \frac{c_0}{2} +  \sum_{j=1}^d \rho_{j, d} c_j T_j(t),
\end{equation}
where $T_j$ is the $j$-degree Chebyshev polynomial of the first kind \cite{ApproxIntroduction}, the coefficients
\begin{equation}\label{eq: Chebyshev coefficient}
  c_j = \frac{2}{\pi} \int_{-1}^1 \frac{\tilde{h}(t) T_j(t)}{\sqrt{1-t^2}}  {\rm d} t = \frac{2}{\pi} \int_a^b \frac{p(t) T_j(t)}{\sqrt{1-t^2}} {\rm d} t,
\end{equation}
and the Jackson damping factors (cf. \cite{eig_count_Saad,Jackson_damp})
\begin{equation}\label{eq: Jackson damping factor}
  \rho_{j, d} = \frac{\sin((j+1)\alpha_d)}{(d+2)\sin(\alpha_d)} + \left( 1-\frac{j+1}{d+2} \right) \cos(j\alpha_d), \quad \text{where } \alpha_d = \frac{\pi}{d+2}.
\end{equation}
Since the coefficients $c_j$ are linear with respect to the polynomial $p$, the mapping $F_d$ is linear in $p$ too: for all real numbers $\beta_1, \beta_2$ and real polynomials $p_1, p_2$, it holds that
\begin{equation}\label{eq: linearity of F_d}
 F_d(\beta_1 p_1 + \beta_2 p_2)= \beta_1 F_d(p_1) + \beta_2 F_d(p_2).
\end{equation}

Define the step function $g$ with the period $2\pi$ to be
\begin{equation}\label{eq: g(theta)}
  g(\theta) = \tilde{h}(\cos \theta) = \begin{cases}
    p(\cos \theta), & \theta \in (-\alpha, -\beta) \cup (\beta, \alpha), \\
    \frac{1}{2}p(\cos\theta), & \theta \in \{ \pm \alpha, \pm \beta \}, \\
    0, & \theta \in [-\pi, -\alpha) \cup (-\beta, \beta) \cup (\alpha, \pi],
  \end{cases}
\end{equation}
where
\begin{equation}\label{alphabeta}
\alpha=\arccos(a),\ \beta=\arccos(b)
\end{equation}
and $g(\theta)$ at $\pm\alpha$ and $\pm\beta$
are the means of respective right and left limits, and the trigonometric polynomial
\begin{equation}\label{eq: q_d(theta)}
  q_d(\theta) = F_d(p)(\cos\theta) = \frac{c_0}{2} + \sum_{j=1}^d \rho_{j, d}c_j\cos(j\theta).
\end{equation}

Analogously to Lemma 1.4 of \cite[Section 1.1.2]{ApproxIntroduction}, we have the following result.

\begin{lemma}
  Let $g$ and $q_d$ be defined as \cref{eq: g(theta),eq: q_d(theta)}, respectively. Then
  \begin{equation}\label{eq: convolution}
    q_d(\theta) = \frac{1}{\pi} \int_{-\pi}^\pi g(\theta + \phi) u_d(\phi) {\rm d} \phi,
  \end{equation}
  where
  \begin{equation}\label{eq: kernel function}
    u_d(\phi) = \frac{1}{2} + \sum_{j=1}^d \rho_{j, d} \cos(j\phi).
  \end{equation}
\end{lemma}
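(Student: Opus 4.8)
The plan is to imitate the classical derivation of the Jackson-kernel convolution identity (Lemma 1.4 of \cite[Section 1.1.2]{ApproxIntroduction}), the only new ingredient being the harmless polynomial factor $p$ hidden inside $\tilde h$. Concretely, I would first rewrite the Chebyshev coefficients $c_j$ in \cref{eq: Chebyshev coefficient} as Fourier--cosine coefficients of $g$, then substitute them into $q_d$ in \cref{eq: q_d(theta)}, and finally recognise the resulting integral kernel as $u_d$.

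First I would pass from the variable $t$ to $\theta$ in \cref{eq: Chebyshev coefficient} via $t=\cos\theta$. Using $\mathrm{d}t=-\sin\theta\,\mathrm{d}\theta$, $\sqrt{1-t^2}=\sin\theta$ for $\theta\in[0,\pi]$, and $T_j(\cos\theta)=\cos(j\theta)$, one gets
\begin{equation*}
  c_j=\frac{2}{\pi}\int_0^\pi \tilde h(\cos\theta)\cos(j\theta)\,\mathrm{d}\theta=\frac{2}{\pi}\int_0^\pi g(\theta)\cos(j\theta)\,\mathrm{d}\theta,\qquad j=0,1,\dots,d.
\end{equation*}
Since $g(-\theta)=\tilde h(\cos(-\theta))=\tilde h(\cos\theta)=g(\theta)$, the function $g$ is even and $2\pi$-periodic, so the integrand $g(\theta)\cos(j\theta)$ is even and
\begin{equation*}
  c_j=\frac{1}{\pi}\int_{-\pi}^{\pi} g(\theta)\cos(j\theta)\,\mathrm{d}\theta,\qquad j=0,1,\dots,d;
\end{equation*}
note that this formula, with the understanding that the $j=0$ term is later multiplied by $1/2$, is exactly what is needed to match the constant $1/2$ appearing in $u_d$.

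Next I would substitute these expressions into \cref{eq: q_d(theta)}. Because the sum is finite, interchanging $\sum_{j=1}^d$ with $\int_{-\pi}^{\pi}$ needs no justification, and
\begin{equation*}
  q_d(\theta)=\frac{1}{\pi}\int_{-\pi}^{\pi} g(\psi)\Bigl(\tfrac12+\sum_{j=1}^d \rho_{j,d}\cos(j\theta)\cos(j\psi)\Bigr)\mathrm{d}\psi ,
\end{equation*}
the Jackson factors $\rho_{j,d}$ being merely fixed scalars carried along. Applying the product-to-sum identity $\cos(j\theta)\cos(j\psi)=\tfrac12\bigl(\cos(j(\psi-\theta))+\cos(j(\psi+\theta))\bigr)$ and using the evenness of $g$ (substitute $\psi\mapsto-\psi$ in the term with $\cos(j(\psi+\theta))$ and use that cosine is even) shows that the two pieces contribute equally, i.e.
\begin{equation*}
  \int_{-\pi}^{\pi} g(\psi)\cos(j(\psi+\theta))\,\mathrm{d}\psi=\int_{-\pi}^{\pi} g(\psi)\cos(j(\psi-\theta))\,\mathrm{d}\psi .
\end{equation*}
Hence the bracketed kernel collapses to $\tfrac12+\sum_{j=1}^d\rho_{j,d}\cos(j(\psi-\theta))=u_d(\psi-\theta)$, and $u_d$ is even so $u_d(\psi-\theta)=u_d(\theta-\psi)$. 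Finally, changing the variable $\phi=\psi-\theta$ and using that the integral of a $2\pi$-periodic function over any interval of length $2\pi$ is the same (applied to $\phi\mapsto g(\theta+\phi)u_d(\phi)$) gives $q_d(\theta)=\frac{1}{\pi}\int_{-\pi}^{\pi} g(\theta+\phi)u_d(\phi)\,\mathrm{d}\phi$, which is \cref{eq: convolution}.

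I do not expect any genuine obstacle: the argument is a routine adaptation of the scalar case. The only points that require a little care are (i) the discontinuity points $\pm\alpha,\pm\beta$ of $g$ form a set of measure zero and hence do not affect any of the integrals, so the boundary values in \cref{eq: g(theta)} are immaterial; (ii) one must correctly invoke the translation invariance of integrals of $2\pi$-periodic functions when shifting the interval back to $[-\pi,\pi]$; and (iii) the product-to-sum step must be combined with the evenness of $g$ to turn $\cos(j\theta)\cos(j\psi)$ into a kernel depending only on $\psi-\theta$.
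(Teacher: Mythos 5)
Your proposal is correct and follows essentially the same route as the paper: convert the $c_j$ to Fourier--cosine coefficients of $g$ via $t=\cos\theta$, substitute into $q_d$, collapse the kernel to $u_d(\psi-\theta)$, and shift back by periodicity. The only cosmetic difference is that the paper absorbs the vanishing sine coefficients $\frac{1}{\pi}\int_{-\pi}^{\pi}g(\phi)\sin(j\phi)\,\mathrm{d}\phi=0$ into the integrand to form $\cos(j(\phi-\theta))$ directly, whereas you use the product-to-sum identity plus the evenness of $g$; these are equivalent.
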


\begin{proof}
  Since $g$ is even, from \cref{eq: Chebyshev coefficient} we obtain
  $$
    \frac{1}{\pi} \int_{-\pi}^\pi g(\phi) \cos(j\phi) {\rm d} \phi = c_j, \quad \frac{1}{\pi} \int_{-\pi}^\pi g(\phi) \sin(j\phi) {\rm d} \phi = 0.
  $$
  Substituting the above relations into \cref{eq: q_d(theta)} yields
  \begin{align*}
    q_d(\theta) &= \frac{1}{\pi} \int_{-\pi}^\pi g(\phi) \left( \frac{1}{2} + \sum_{j=1}^d \rho_{j,d} \left( \cos(j\phi)\cos(j\theta) + \sin(j\phi)\sin(j\theta) \right) \right) {\rm d} \phi \\
    &= \frac{1}{\pi} \int_{-\pi}^\pi g(\phi) u_d(\phi - \theta) {\rm d} \phi = \frac{1}{\pi} \int_{-\pi-\theta}^{\pi-\theta} g(\theta + \phi) u_d(\phi) {\rm d} \phi.
  \end{align*}
  Because of the periodicity of $g$ and $u_d$, \cref{eq: convolution} follows directly from the above relation.
\end{proof}

For $u_d$, there are the following properties.

\begin{proposition}
  Let $u_d$ be defined as \cref{eq: kernel function} and $d \geq 2$. Then
  \begin{align}
    & u_d(\phi) \geq 0, \label{eq: u_d nonnegative} \\
    & \frac{1}{\pi} \int_{-\pi}^\pi u_d(\phi) {\rm d} \phi = 1, \label{eq: u_d zero moment} \\
    & \frac{1}{\pi} \int_{-\pi}^\pi \vert \phi \vert u_d(\phi) {\rm d} \phi \leq \frac{\pi^2}{2(d+2)}, \label{eq: u_d first moment} \\
    & \frac{1}{\pi} \int_{-\pi}^\pi \phi^2 u_d(\phi) {\rm d} \phi \leq \frac{\pi^4}{4(d+2)^2}, \label{eq: u_d second moment}\\
    & \frac{1}{\pi} \int_{-\pi}^\pi \phi^4 u_d(\phi) {\rm d} \phi \leq \left( \frac{1}{4} + \frac{(d+1)\pi^2}{16(d+2)^2} \right) \frac{\pi^6}{(d+2)^3} \leq \frac{\pi^6}{2(d+2)^3}. \label{eq: u_d fourth moment}
  \end{align}
\end{proposition}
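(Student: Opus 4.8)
The plan is to reduce all five assertions to two facts: the nonnegativity \eqref{eq: u_d nonnegative} of the Jackson kernel, which is classical (it has a closed-form expression as a nonnegative trigonometric function; see \cite{eig_count_Saad,Jackson_damp}), and the explicit values of the first two Fourier coefficients of $u_d$. Indeed, \eqref{eq: u_d zero moment} is immediate from \eqref{eq: kernel function} by term-by-term integration, since $\int_{-\pi}^\pi\cos(j\phi)\,{\rm d}\phi=0$ for $j\geq 1$. For the remaining three bounds, first I would record, by inserting $\alpha_d=\pi/(d+2)$ into \eqref{eq: Jackson damping factor} and using $\sin(2\alpha_d)=2\sin\alpha_d\cos\alpha_d$ and $\sin(3\alpha_d)=\sin\alpha_d\,(1+2\cos(2\alpha_d))$, that $\rho_{1,d}=\cos\alpha_d$ and $\rho_{2,d}=\big(1+(d+1)\cos(2\alpha_d)\big)/(d+2)$. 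Since orthogonality of $\{\cos(j\phi)\}$ over $[-\pi,\pi]$ together with \eqref{eq: kernel function} gives $\frac1\pi\int_{-\pi}^\pi\cos(j\phi)u_d(\phi)\,{\rm d}\phi=\rho_{j,d}$ for $1\leq j\leq d$ (the case $j=2$ uses $d\geq2$), one then obtains $\frac1\pi\int_{-\pi}^\pi(1-\cos\phi)u_d\,{\rm d}\phi=1-\cos\alpha_d=2\sin^2(\alpha_d/2)$ and, after expanding $(1-\cos\phi)^2=\tfrac32-2\cos\phi+\tfrac12\cos(2\phi)$, $\frac1\pi\int_{-\pi}^\pi(1-\cos\phi)^2u_d\,{\rm d}\phi=\tfrac32-2\rho_{1,d}+\tfrac12\rho_{2,d}$.

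The second ingredient is the elementary pointwise estimate $\abs{\phi}\leq\pi\abs{\sin(\phi/2)}$ on $[-\pi,\pi]$ (because $x/\sin x$ increases on $(0,\pi/2]$ to the value $\pi/2$), which upon squaring and using $\sin^2(\phi/2)=(1-\cos\phi)/2$ yields $\phi^2\leq\tfrac{\pi^2}{2}(1-\cos\phi)$, $\phi^4\leq\tfrac{\pi^4}{4}(1-\cos\phi)^2$, and $\abs{\phi}\leq\tfrac{\pi}{\sqrt2}\sqrt{1-\cos\phi}$. Multiplying each by $u_d\geq0$ and integrating, \eqref{eq: u_d second moment} follows from $\frac1\pi\int\phi^2u_d\leq\tfrac{\pi^2}{2}\cdot 2\sin^2(\alpha_d/2)=\pi^2\sin^2(\alpha_d/2)\leq\pi^2(\alpha_d/2)^2=\tfrac{\pi^4}{4(d+2)^2}$; for \eqref{eq: u_d first moment} I would apply the Cauchy--Schwarz inequality for the probability measure $\tfrac1\pi u_d\,{\rm d}\phi$ to get $\frac1\pi\int\abs{\phi}u_d\leq\tfrac{\pi}{\sqrt2}\big(\tfrac1\pi\int(1-\cos\phi)u_d\big)^{1/2}=\tfrac{\pi}{\sqrt2}\cdot\sqrt2\,\abs{\sin(\alpha_d/2)}\leq\pi\cdot(\alpha_d/2)=\tfrac{\pi^2}{2(d+2)}$.

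For \eqref{eq: u_d fourth moment} the same device gives $\frac1\pi\int\phi^4u_d\leq\tfrac{\pi^4}{4}\big(\tfrac32-2\rho_{1,d}+\tfrac12\rho_{2,d}\big)$, and the remaining work is purely algebraic: substituting the closed forms of $\rho_{1,d},\rho_{2,d}$, writing $s=\sin^2(\alpha_d/2)$, and using $\cos\alpha_d=1-2s$, $\cos(2\alpha_d)=1-8s(1-s)$, one should find that the parenthesis collapses to $4s\big(1+(d+1)s\big)/(d+2)$. Then $s\leq(\alpha_d/2)^2=\pi^2/\big(4(d+2)^2\big)$ produces the first inequality in \eqref{eq: u_d fourth moment}, and the crude bound $\tfrac{(d+1)\pi^2}{16(d+2)^2}\leq\tfrac{\pi^2}{16(d+2)}\leq\tfrac{\pi^2}{64}<\tfrac14$, valid for $d\geq2$, gives the second.

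The step I expect to be the main obstacle is this last algebraic collapse in \eqref{eq: u_d fourth moment}: getting the residual $\tfrac32-2\rho_{1,d}+\tfrac12\rho_{2,d}$ to simplify exactly to $4s(1+(d+1)s)/(d+2)$ hinges on identifying the right closed forms for $\rho_{1,d}$ and $\rho_{2,d}$ and on choosing the majorants $(1-\cos\phi)$, $(1-\cos\phi)^2$ rather than bare powers of $\phi$, since the latter are too coarse to recover the $(d+2)^{-3}$ decay. The nonnegativity \eqref{eq: u_d nonnegative} is the conceptually heaviest input, but it is a standard property of the Jackson kernel and can simply be quoted.
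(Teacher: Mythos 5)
Your proof is correct: the closed forms $\rho_{1,d}=\cos\alpha_d$ and $\rho_{2,d}=\bigl(1+(d+1)\cos(2\alpha_d)\bigr)/(d+2)$ check out, the collapse of $\tfrac32-2\rho_{1,d}+\tfrac12\rho_{2,d}$ to $4s\bigl(1+(d+1)s\bigr)/(d+2)$ with $s=\sin^2(\alpha_d/2)$ is exact, and the final substitutions $s\leq(\alpha_d/2)^2$ reproduce both bounds in \eqref{eq: u_d fourth moment} as well as \eqref{eq: u_d first moment} and \eqref{eq: u_d second moment}. The paper's own proof merely cites the nonnegativity factorization in Rivlin and the moment estimates from the proofs of Theorems 3.1--3.2 of the CJ-FEAST reference, which rest on exactly the classical Jackson-kernel device you employ (the majorants $(1-\cos\phi)^k$ obtained from $\abs{\phi}\leq\pi\abs{\sin(\phi/2)}$ together with the Fourier coefficients of $u_d$), so your argument is a correct, self-contained rendition of the same approach.
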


\begin{proof}
  It is known from \cite[Section 1.1.2]{ApproxIntroduction} that
  \begin{displaymath}
    u_d(\phi) = \left( \sum_{k=0}^{d} t_k \eConst^{\imagUnit k \phi} \right) \left( \sum_{k=0}^{d} t_k \eConst^{-\imagUnit k \phi} \right) \geq 0, \quad \text{where } t_k = \frac{ \sin\frac{k+1}{d+2}\pi }{ \sqrt{ 2\sum_{k=0}^d \sin^2\frac{k+1}{d+2}\pi } },
  \end{displaymath}
  which yields \cref{eq: u_d nonnegative}.
  Relation \cref{eq: u_d zero moment} is from the proof of
   \cite[Theorem 3.1]{CJ-FEAST-cross}.
  By \eqref{eq: u_d zero moment} and the Cauchy--Schwarz inequality, relation (1.1.24) of \cite[Section 1.1.2]{ApproxIntroduction} shows that
  $$
  \frac{1}{\pi} \int_{-\pi}^\pi \vert \phi \vert u_d(\phi) {\rm d} \phi \leq\left(\frac{1}{\pi} \int_{-\pi}^\pi \phi^2 u_d(\phi) {\rm d} \phi\right)^{1/2},
  $$
  and (1.1.25) in this book establishes an expression of its bound, whose estimates
  \cref{eq: u_d first moment,eq: u_d second moment} are from the proof of
  \cite[Theorem 3.2]{CJ-FEAST-cross}. Finally,
  \cref{eq: u_d fourth moment} is straightforward
  from the proof of \cite[Theorem 3.2]{CJ-FEAST-cross}.
\end{proof}

The intermediate bound in \cref{eq: u_d fourth moment} can be asymptotically simplified.
For a modestly sized $d$, e.g., say 50, ignoring the higher order term in it gives
\begin{equation}\label{eq: u_d fourth moment asymptotic}
  \frac{1}{\pi} \int_{-\pi}^\pi \phi^4 u_d(\phi) {\rm d} \phi \lesssim \frac{\pi^6}{4(d+2)^3}.
\end{equation}

We next establish quantitative pointwise convergence results of the approximation $q_d$ to $g$.

\begin{theorem}\label{thm: accuracy outside}
  Let $g$ and $q_d$ be as defined in \cref{eq: g(theta),eq: q_d(theta)}, respectively,
  and $\alpha$ and $\beta$ be as defined in \cref{alphabeta}.
  Define
  \begin{displaymath} 
    \Delta_\theta = \min \{ \vert \theta-\alpha \vert , \vert \theta-\beta \vert \},
  \end{displaymath}
  and the $L_\infty$-norm of $g$ on an interval $(\phi_1, \phi_2)$ as
  \begin{displaymath}
    \norm[L_\infty(\phi_1, \phi_2)]{g} = \sup_{\phi\in (\phi_1, \phi_2)} \abs{g(\phi)} .
  \end{displaymath}
  Then for $\theta\in [0,\pi]\setminus [\beta,\alpha]$ and $d \geq 2$, we have
  \begin{equation}\label{eq: accuracy outside}
    |q_d(\theta) - g(\theta) \vert \leq \frac{\pi^6\norm[L_\infty(0, \pi)]{g}}{2\Delta_\theta^4(d+2)^3}.
  \end{equation}
\end{theorem}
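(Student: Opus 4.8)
The plan is to combine the convolution identity \eqref{eq: convolution} with the normalization \eqref{eq: u_d zero moment}. Since $\theta\in[0,\pi]\setminus[\beta,\alpha]$ lies strictly outside the range on which $g$ is nonzero, we have $g(\theta)=0$, and hence
\[
  q_d(\theta)-g(\theta)=q_d(\theta)=\frac{1}{\pi}\int_{-\pi}^{\pi} g(\theta+\phi)\,u_d(\phi)\,{\rm d}\phi .
\]
The heuristic is that the kernel $u_d$ concentrates near $\phi=0$ while $g(\theta+\phi)$ can be nonzero only for $\phi$ bounded away from $0$ by $\Delta_\theta$; the fourth moment estimate \eqref{eq: u_d fourth moment} then converts this separation into the claimed rate.

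First I would localize the integrand. Viewing $g$ as a $2\pi$-periodic even function, from \eqref{eq: g(theta)} its support on the circle $\mathbb{R}/2\pi\mathbb{Z}$ is the union of the two open arcs $(\beta,\alpha)$ and $(-\alpha,-\beta)$, where $0\le\beta<\alpha\le\pi$ by \eqref{alphabeta}. An elementary case distinction --- $\theta\in(\alpha,\pi]$, in which $\Delta_\theta=\theta-\alpha$, versus $\theta\in[0,\beta)$, in which $\Delta_\theta=\beta-\theta$ --- together with the hypothesis $0\le\theta\le\pi$ shows that the circular distance from $\theta$ to this support set is at least $\Delta_\theta$; the inequalities one needs are of the type $2\pi-\alpha-\theta\ge\theta-\alpha$ and $\beta+\theta\ge\beta-\theta$, both immediate from $\theta\le\pi$ and $\theta\ge0$. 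Since for $|\phi|\le\pi$ the circular distance between $\theta$ and $\theta+\phi$ is exactly $|\phi|$, it follows that $g(\theta+\phi)=0$ whenever $|\phi|<\Delta_\theta$, so the integral above runs effectively over $\{\,\Delta_\theta\le|\phi|\le\pi\,\}$.

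Then it remains to estimate. Using $u_d\ge0$ from \eqref{eq: u_d nonnegative}, the bound $|g(\theta+\phi)|\le\norm[L_\infty(0,\pi)]{g}$ (valid for every real argument by evenness and periodicity of $g$), and $1\le\phi^4/\Delta_\theta^4$ on $\{|\phi|\ge\Delta_\theta\}$, one gets
\[
  |q_d(\theta)|\le\frac{\norm[L_\infty(0,\pi)]{g}}{\pi}\int_{\Delta_\theta\le|\phi|\le\pi}u_d(\phi)\,{\rm d}\phi
  \le\frac{\norm[L_\infty(0,\pi)]{g}}{\Delta_\theta^4}\cdot\frac{1}{\pi}\int_{-\pi}^{\pi}\phi^4 u_d(\phi)\,{\rm d}\phi ,
\]
and \eqref{eq: u_d fourth moment}, which requires $d\ge2$, bounds the last factor by $\pi^6/\bigl(2(d+2)^3\bigr)$, yielding exactly \eqref{eq: accuracy outside}. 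The only step that is not a routine substitution of the Proposition's estimates is the support/distance bookkeeping of the second paragraph: one must track the periodic copies of the two arcs and check which wrap-around comes closest to $\theta$, and it is precisely the restriction $\theta\in[0,\pi]$ that keeps all of those distances at least $\Delta_\theta$.
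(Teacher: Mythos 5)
Your proposal is correct and follows essentially the same route as the paper's proof: exploit $g(\theta)=0$ outside $[\beta,\alpha]$, localize the convolution integral to $\Delta_\theta\le\vert\phi\vert\le\pi$ using the support of the periodic extension of $g$, insert the factor $\phi^4/\Delta_\theta^4\ge 1$, and invoke the fourth-moment bound \cref{eq: u_d fourth moment}. The only difference is cosmetic: you phrase the localization in terms of circular distance with an explicit case split, whereas the paper states directly that $\theta+\phi\in(-\beta,\beta)\cup(\alpha,2\pi-\alpha)$ whenever $\vert\phi\vert<\Delta_\theta$.
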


\begin{proof}
  Due to \cref{eq: convolution}, \cref{eq: u_d nonnegative} and $g(\theta) = 0$ for $\theta\in [0,\pi]\setminus [\beta,\alpha]$, we have
  \begin{displaymath}
    | q_d(\theta) - g(\theta) \vert =| q_d(\theta)|= \left\vert \frac{1}{\pi} \int_{-\pi}^\pi g(\theta + \phi) u_d(\phi) {\rm d} \phi \right\vert \leq \frac{1}{\pi} \int_{-\pi}^\pi \vert g(\theta + \phi) \vert u_d(\phi) {\rm d} \phi.
  \end{displaymath}
  According to \cref{eq: g(theta)}, $g(\varphi) = 0$ for $\varphi \in (-\beta, \beta) \cup (\alpha, 2\pi-\alpha)$.
  For $ \vert \phi \vert < \Delta_\theta$, since $\theta \in [0,\pi]\setminus [\beta,\alpha]$, we have $\theta + \phi \in (-\beta, \beta) \cup (\alpha, 2\pi-\alpha)$ and hence $g(\theta + \phi) = 0$.
  Therefore,
  \begin{displaymath}
    \frac{1}{\pi} \int_{-\pi}^\pi \vert g(\theta + \phi) \vert u_d(\phi) {\rm d} \phi = \frac{1}{\pi} \int_{\Delta_\theta \leq \vert \phi \vert \leq \pi} \vert g(\theta + \phi) \vert u_d(\phi) {\rm d} \phi.
  \end{displaymath}
  For $\Delta_\theta \leq \vert \phi \vert \leq \pi$, we have
  \begin{displaymath}
    \vert g(\theta + \phi) \vert \leq \norm[L_\infty(0, \pi)]{g} \leq \norm[L_\infty(0, \pi)]{g} \cdot \frac{\phi^4}{\Delta_\theta^4}.
  \end{displaymath}
  Then combining the above three relations and \cref{eq: u_d fourth moment}, we obtain
  \begin{displaymath}
    | q_d(\theta) - g(\theta) |
    \leq \frac{\norm[L_\infty(0, \pi)]{g}}{\Delta_\theta^4} \frac{1}{\pi} \int_{\Delta_\theta \leq \abs{\phi} \leq \pi} \phi^4 u_d(\phi) {\rm d} \phi
    \leq \frac{\pi^6\norm[L_\infty(0, \pi)]{g}}{2\Delta_\theta^4(d+2)^3},
  \end{displaymath}
  which proves \eqref{eq: accuracy outside}.
\end{proof}

\begin{remark}\label{rk: accuracy determined by behavoir of the difference}
  As is seen from the proof,
  the difference $g(\theta+\phi)-g(\theta)$ on $(-\Delta_{\theta}, \Delta_{\theta})$ is zero for
  $\theta\in [0,\pi]\setminus [\beta,\alpha]$ while, if $p$ in \cref{eq: g(theta)} is not constant, it is nonzero for $\theta\in[\beta, \alpha]$, which
  implies that $|g(\theta+\phi)-g(\theta)|$ may be larger when $\theta\in [\beta, \alpha]$ than it is when $\theta \in [0,\pi]\setminus [\beta,\alpha]$.
  This is indeed the case, as the following two theorems indicate.
\end{remark}

\begin{theorem}\label{thm: accuracy inside}
 With the same notations as in \cref{thm: accuracy outside}, for $\theta \in (\beta, \alpha)$ and $d \geq 2$, we have
 \begin{equation}\label{eq: accuracy inside}
   | q_d(\theta) - g(\theta) \vert \leq \frac{\pi^6\norm[L_\infty(0, \pi)]{g}}{\Delta_{\theta}^4(d+2)^3} + \frac{\pi^4\norm[L_\infty(\beta, \alpha)]{g''}}{8(d+2)^2}.
 \end{equation}
\end{theorem}

\begin{proof}
  From \cref{eq: convolution,eq: u_d zero moment}, we obtain
  \begin{align*}
    q_d(\theta) - g(\theta)
    =& \frac{1}{\pi} \int_{-\pi}^\pi g(\theta + \phi) u_d(\phi) {\rm d} \phi - \frac{1}{\pi} \int_{-\pi}^\pi g(\theta) u_d(\phi) {\rm d} \phi \\
    =& \frac{1}{\pi} \int_{\Delta_{\theta} \leq \vert \phi \vert \leq \pi} \left( g(\theta + \phi) - g(\theta) \right) u_d(\phi) {\rm d} \phi \\
    &+ \frac{1}{\pi} \int_{-\Delta_{\theta}}^{\Delta_{\theta}} \left( g(\theta + \phi) - g(\theta) \right) u_d(\phi) {\rm d} \phi.
  \end{align*}

 Below we analyze the above two terms in $q_d(\theta) - g(\theta)$ separately.
 From the nonnegativity of $u_d$ shown in \cref{eq: u_d nonnegative}, we have
  \begin{displaymath}
    \left\vert \frac{1}{\pi} \int_{\Delta_{\theta} \leq \vert \phi \vert \leq \pi} \left( g(\theta + \phi) - g(\theta) \right) u_d(\phi) {\rm d} \phi \right\vert \leq \frac{1}{\pi} \int_{\Delta_{\theta} \leq \vert \phi \vert \leq \pi} \vert g(\theta + \phi) - g(\theta) \vert u_d(\phi) {\rm d} \phi.
  \end{displaymath}
  For $\Delta_{\theta} \leq \vert \phi \vert \leq \pi$, it holds that
  \begin{displaymath}
    \vert g(\theta+\phi)-g(\theta) \vert \leq 2\norm[L_\infty(0, \pi)]{g} \leq 2\norm[L_\infty(0, \pi)]{g} \frac{\phi^4}{\Delta_{\theta}^4}.
  \end{displaymath}
  Thus from the above relations and \cref{eq: u_d fourth moment}, we obtain
  \begin{equation}\label{eq: accuracy inside for phi large}
    \begin{aligned}
      \left\vert \frac{1}{\pi} \int_{\Delta_{\theta} \leq \vert \phi \vert \leq \pi} \left( g(\theta + \phi) - g(\theta) \right) u_d(\phi) {\rm d} \phi \right\vert
      &\leq \frac{2\norm[L_\infty(0, \pi)]{g}}{\Delta_{\theta}^4} \frac{1}{\pi} \int_{-\pi}^{\pi} \phi^4 u_d(\phi) {\rm d} \phi \\
      &\leq \frac{\pi^6\norm[L_\infty(0, \pi)]{g}}{\Delta_{\theta}^4(d+2)^3}.
    \end{aligned}
  \end{equation}

  On the other hand, since $u_d$ defined in \cref{eq: kernel function} is even, we have
  \begin{displaymath}
    \frac{1}{\pi} \int_{-\Delta_{\theta}}^{\Delta_{\theta}} \left( g(\theta + \phi) - g(\theta) \right) u_d(\phi) {\rm d} \phi = \frac{1}{\pi} \int_0^{\Delta_{\theta}} \left( g(\theta + \phi) + g(\theta - \phi) - 2g(\theta) \right) u_d(\phi) {\rm d} \phi.
  \end{displaymath}
  Let $G(\phi) = g(\theta + \phi) + g(\theta - \phi) - 2g(\theta)$.
  For $ \vert \phi \vert < \Delta_\theta$, we have $\theta \pm \phi \in (\beta, \alpha)$,
  meaning that $g(\theta\pm\phi) = p(\cos(\theta\pm\phi))$ and $G(\phi)$ is thus smooth.
  Since $G(0) = G'(0) = 0$, there is a $\mu \in (0, 1)$ such that
  \begin{displaymath}
    G(\phi) = G(0) + G'(0)\phi + \frac{1}{2}G''(\mu\phi)\phi^2 = \frac{1}{2} \left( g''(\theta + \mu\phi) + g''(\theta - \mu\phi) \right) \phi^2,
  \end{displaymath}
  which shows that
  \begin{displaymath}
    | G(\phi) \vert \leq \norm[L_\infty(\beta, \alpha)]{g''}\phi^2.
  \end{displaymath}
Therefore,
  \begin{displaymath}
    \left\vert \frac{1}{\pi} \int_{-\Delta_{\theta}}^{\Delta_{\theta}} \left( g(\theta + \phi) - g(\theta) \right) u_d(\phi) {\rm d} \phi \right\vert \leq \norm[L_\infty(\beta, \alpha)]{g''} \frac{1}{\pi} \int_0^\pi \phi^2 u_d(\phi) {\rm d} \phi.
  \end{displaymath}
  Making use of \cref{eq: u_d second moment} and observing that the integration region is half, we obtain
  \begin{displaymath}
    \left\vert \frac{1}{\pi} \int_{-\Delta_{\theta}}^{\Delta_{\theta}} \left( g(\theta + \phi) - g(\theta) \right) u_d(\phi) {\rm d} \phi \right\vert \leq \frac{\pi^4 \norm[L_\infty(\beta, \alpha)]{g''}}{8(d+2)^2},
  \end{displaymath}
  which, together with \cref{eq: accuracy inside for phi large}, establishes \cref{eq: accuracy inside}.
\end{proof}

\begin{theorem}\label{thm: accuracy at alpha and beta}
  With the same notations as in \cref{thm: accuracy outside}, let
  \begin{displaymath}
    \Delta_\alpha = \min \{ \alpha - \beta, 2\pi-2\alpha \}, \quad \Delta_\beta = \min \{ \alpha - \beta, 2\beta \}.
  \end{displaymath}
 Then for $d \geq 2$, we have
  \begin{align}
    | q_d(\alpha) - g(\alpha) \vert \leq \frac{3\pi^6\norm[L_\infty(0, \pi)]{g}}{4\Delta_\alpha^4(d+2)^3} + \frac{\pi^2\norm[L_\infty(\beta, \alpha)]{g'}}{4(d+2)}, \label{eq: accuracy at alpha} \\
    | q_d(\beta) - g(\beta) \vert \leq \frac{3\pi^6\norm[L_\infty(0, \pi)]{g}}{4\Delta_\beta^4(d+2)^3} + \frac{\pi^2\norm[L_\infty(\beta, \alpha)]{g'}}{4(d+2)}. \label{eq: accuracy at beta}
  \end{align}
\end{theorem}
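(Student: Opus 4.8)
The plan is to recycle, almost verbatim, the argument behind \cref{thm: accuracy inside}, the only essential change being that near a discontinuity point the symmetrized integrand is now merely \emph{first} order small in $\phi$ (a nonzero slope of $p(\cos\cdot)$ survives) instead of second order, so the first-moment bound \cref{eq: u_d first moment} takes over the role that \cref{eq: u_d second moment} played there. I will carry out the estimate at $\theta=\alpha$; the one at $\theta=\beta$ is word-for-word identical after replacing $\alpha$ by $\beta$ and $\Delta_\alpha$ by $\Delta_\beta$, since $\Delta_\beta=\min\{\alpha-\beta,2\beta\}$ is tailored so that the same geometric dichotomy (described in the next paragraph) holds at $\beta$. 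Starting from \cref{eq: convolution} and using the zeroth-moment identity \cref{eq: u_d zero moment} to write $g(\alpha)=\frac1\pi\int_{-\pi}^{\pi}g(\alpha)u_d(\phi)\,{\rm d}\phi$, I split
\begin{displaymath}
  q_d(\alpha)-g(\alpha)=\frac1\pi\int_{\Delta_\alpha\le|\phi|\le\pi}\bigl(g(\alpha+\phi)-g(\alpha)\bigr)u_d(\phi)\,{\rm d}\phi+\frac1\pi\int_{-\Delta_\alpha}^{\Delta_\alpha}\bigl(g(\alpha+\phi)-g(\alpha)\bigr)u_d(\phi)\,{\rm d}\phi
\end{displaymath}
and bound the two pieces separately, using $d\geq 2$ so that the moment estimates of the Proposition apply.

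For the near-field term I use that $u_d$ is even to rewrite it as $\frac1\pi\int_0^{\Delta_\alpha}\bigl(g(\alpha+\phi)+g(\alpha-\phi)-2g(\alpha)\bigr)u_d(\phi)\,{\rm d}\phi$. The key observation is that, because $\Delta_\alpha=\min\{\alpha-\beta,\,2\pi-2\alpha\}$, for $0<\phi<\Delta_\alpha$ one has $\alpha-\phi\in(\beta,\alpha)$ and $\alpha+\phi\in(\alpha,2\pi-\alpha)$, so \cref{eq: g(theta)} (together with the $2\pi$-periodicity and evenness of $g$) forces $g(\alpha+\phi)=0$ and $g(\alpha-\phi)=p(\cos(\alpha-\phi))$, while $2g(\alpha)=p(\cos\alpha)$. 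Hence the integrand equals $p(\cos(\alpha-\phi))-p(\cos\alpha)$, which vanishes at $\phi=0$, and the mean value theorem gives $\abs{p(\cos(\alpha-\phi))-p(\cos\alpha)}\le\norm[L_\infty(\beta,\alpha)]{g'}\abs{\phi}$, since the relevant derivative point lies in $(\beta,\alpha)$ where $g$ coincides with $p(\cos\cdot)$. Because $\abs{\phi}u_d(\phi)$ is even, \cref{eq: u_d first moment} yields $\frac1\pi\int_0^{\Delta_\alpha}\abs{\phi}u_d(\phi)\,{\rm d}\phi\le\tfrac12\cdot\frac1\pi\int_{-\pi}^{\pi}\abs{\phi}u_d(\phi)\,{\rm d}\phi\le\frac{\pi^2}{4(d+2)}$, so the near-field term is at most $\frac{\pi^2\norm[L_\infty(\beta,\alpha)]{g'}}{4(d+2)}$, which is the second summand in \cref{eq: accuracy at alpha}.

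For the far-field term I use the crude estimate $\abs{g(\alpha+\phi)-g(\alpha)}\le\abs{g(\alpha+\phi)}+\abs{g(\alpha)}\le\tfrac32\norm[L_\infty(0,\pi)]{g}$: here $\abs{g(\alpha+\phi)}\le\norm[L_\infty(0,\pi)]{g}$ by periodicity and evenness, and $\abs{g(\alpha)}=\tfrac12\abs{p(\cos\alpha)}\le\tfrac12\norm[L_\infty(0,\pi)]{g}$ because $p(\cos\alpha)$ is the one-sided limit of $g$ at $\alpha$ from inside $(\beta,\alpha)$. Inflating by $\phi^4/\Delta_\alpha^4\ge1$ on $\{\Delta_\alpha\le\abs{\phi}\le\pi\}$ and invoking \cref{eq: u_d fourth moment} gives the far-field bound $\frac{3\norm[L_\infty(0,\pi)]{g}}{2\Delta_\alpha^4}\cdot\frac{\pi^6}{2(d+2)^3}=\frac{3\pi^6\norm[L_\infty(0,\pi)]{g}}{4\Delta_\alpha^4(d+2)^3}$, the first summand in \cref{eq: accuracy at alpha}. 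Adding the two pieces proves \cref{eq: accuracy at alpha}; repeating the computation verbatim at $\theta=\beta$ (now $\beta+\phi\in(\beta,\alpha)$ and $\beta-\phi\in(-\beta,\beta)$ for $0<\phi<\Delta_\beta$) proves \cref{eq: accuracy at beta}.

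The only genuinely delicate point — and the one I would state with care — is the geometric dichotomy in the near-field step: one must verify that the prescribed $\Delta_\alpha$ (resp.\ $\Delta_\beta$) is small enough that, of the two reflected points $\alpha\pm\phi$ (resp.\ $\beta\pm\phi$), exactly one stays inside $(\beta,\alpha)$, where $g$ is the smooth function $p(\cos\cdot)$, and the other falls into a region where $g\equiv0$; this is precisely what turns the symmetrized integrand into a true first-order difference of $p(\cos\cdot)$ rather than a difference that still feels the jump. The measure-zero edge cases in which an endpoint of the $\phi$-range meets $\{\pm\alpha,\pm\beta\}$ (possible when $\Delta_\alpha=2\pi-2\alpha$ or $\Delta_\beta=2\beta$) are harmless for the integrals. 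Everything else is the same routine bookkeeping already carried out in the proofs of \cref{thm: accuracy outside,thm: accuracy inside}.
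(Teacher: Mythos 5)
Your proof is correct and follows essentially the same route as the paper's: the convolution identity \cref{eq: convolution}, a near/far split at $\Delta_\alpha$ (resp.\ $\Delta_\beta$), the fourth-moment bound \cref{eq: u_d fourth moment} in the far field and the first-moment bound \cref{eq: u_d first moment} in the near field, with exactly the same constants $3/4$ and $1/4$ emerging. The only organizational difference is that the paper splits $q_d(\alpha)-g(\alpha)$ into two half-range integrals against the one-sided limits $g(\alpha\pm 0)$, whereas you keep $g(\alpha)=\tfrac12\bigl(g(\alpha+0)+g(\alpha-0)\bigr)$ and reuse the symmetrization of \cref{thm: accuracy inside}, correctly observing that the symmetrized increment collapses to a first-order difference of the smooth branch because exactly one of $\alpha\pm\phi$ falls where $g$ vanishes; the two bookkeepings are equivalent.
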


\begin{proof}
  Since $g(\alpha) = \frac{1}{2}( g(\alpha+0) + g(\alpha-0) )$, from \cref{eq: convolution,eq: u_d zero moment}, we obtain
  \begin{displaymath}
    q_d(\alpha) - g(\alpha) = \frac{1}{\pi} \int_{0}^\pi (g(\alpha + \phi)-g(\alpha+0)) u_d(\phi) {\rm d} \phi + \frac{1}{\pi} \int_{-\pi}^0 ( g(\alpha + \phi) - g(\alpha-0) ) u_d(\phi) {\rm d} \phi.
  \end{displaymath}
  We first consider the integral over $[0, \pi]$.
  For $0 < \phi < \Delta_\alpha$, we have $\alpha + \phi \in (\alpha, 2\pi-\alpha)$.
  Therefore, it follows from \cref{eq: g(theta)} that $g(\alpha+\phi)=g(\alpha+0)=0$.
  For $\Delta_\alpha < \phi < \pi$, we have
  $$
  \vert g(\alpha+\phi) - g(\alpha+0) \vert = \vert g(\alpha+\phi) \vert \leq \norm[L_\infty(0,\pi)]{g} \frac{\phi^4}{\Delta_\alpha^4}.
  $$
  Then by \cref{eq: u_d fourth moment}, we obtain
  \begin{equation}\label{eq: accuracy at alpha for phi large}
    \begin{aligned}
      \left\vert \frac{1}{\pi} \int_{0}^\pi (g(\alpha + \phi)-g(\alpha+0)) u_d(\phi) {\rm d} \phi \right\vert
      &\leq \frac{\norm[L_\infty(0, \pi)]{g}}{\Delta_\alpha^4} \frac{1}{\pi} \int_{\Delta_\alpha}^\pi \phi^4 u_d(\phi) {\rm d} \phi \\
      &\leq \frac{\norm[L_\infty(0, \pi)]{g}}{2\Delta_\alpha^4} \frac{1}{\pi} \int_{-\pi}^\pi \phi^4 u_d(\phi) {\rm d} \phi \\
      &\leq \frac{\pi^6\norm[L_\infty(0, \pi)]{g}}{4\Delta_\alpha^4(d+2)^3}.
    \end{aligned}
  \end{equation}

  For $-\Delta_\alpha < \phi < 0$, we have  $\alpha + \phi \in (\beta, \alpha)$.
  Since $p$ is smooth, there is a $\mu \in (0, 1)$ such that
  \begin{displaymath}
    | g(\alpha+\phi) - g(\alpha-0) \vert = \vert g'(\alpha+\mu\phi) \vert \vert \phi \vert \leq \norm[L_\infty(\beta, \alpha)]{g'} \vert \phi \vert .
  \end{displaymath}
  Then combining the above relation with \cref{eq: u_d first moment} yields
  \begin{align*}
    \left\vert \frac{1}{\pi} \int_{-\Delta_\alpha}^0 \left( g(\alpha + \phi) - g(\alpha-0) \right) u_d(\phi) {\rm d} \phi \right\vert
    &\leq \norm[L_\infty(\beta, \alpha)]{g'} \frac{1}{\pi} \int_{-\Delta_\alpha}^0 \vert \phi \vert u_d(\phi) {\rm d} \phi \\
    &\leq \frac{\norm[L_\infty(\beta, \alpha)]{g'}}{2} \frac{1}{\pi} \int_{-\pi}^\pi \vert \phi \vert u_d(\phi) {\rm d} \phi \\
    &\leq \frac{\pi^2 \norm[L_\infty(\beta, \alpha)]{g'}}{4(d+2)}.
  \end{align*}
  Similarly to \eqref{eq: accuracy inside for phi large},
we can establish the following bound:
  \begin{displaymath}
    \left\vert \frac{1}{\pi} \int_{-\pi}^{-\Delta_\alpha} \left( g(\alpha + \phi) - g(\alpha-0) \right) u_d(\phi) {\rm d} \phi \right\vert \leq \frac{\pi^6\norm[L_\infty(0, \pi)]{g}}{2\Delta_\alpha^4(d+2)^3}.
  \end{displaymath}
  Thus \cref{eq: accuracy at alpha} follows
  from the above two relations and \cref{eq: accuracy at alpha for phi large}.
  The proof of \cref{eq: accuracy at beta} is analogous.
\end{proof}

\begin{remark}\label{rk: accuracy in the interval}
  If $M=1$, then $p\in\mathcal{P}_{M-1}$ is a constant and the second terms
   in \eqref{eq: accuracy inside}, \eqref{eq: accuracy at alpha} and
   \eqref{eq: accuracy at beta}
   vanish. As a result, the preceding two theorems degenerate to \cite[Theorems 3.2 and 3.3]{CJ-FEAST-cross}, where the error bounds for $\theta \in [0,\pi]$ tend to zero as fast as $(d+2)^{-3}$.
  However, as long as $p$ is non-constant, the $L_\infty$-norms of $g'$ and $g''$ are nonzero, and the second terms in \eqref{eq: accuracy inside}, \eqref{eq: accuracy at alpha} and
   \eqref{eq: accuracy at beta} will dominate the error bounds for sufficiently large $d$, and the
   convergence becomes slow.
\end{remark}

We present an example to illustrate that the convergence rates in above theorems are indeed order optimal.
Take $[a, b] = [-0.2, 0.4] \subset [-1, 1]$ and three points $t=-0.6, -0.2, 0.1$, of which $-0.6$ and $0.1$ are outside and inside $[a, b]$, respectively.
Polynomials $p(t)$ are the Chebyshev polynomials of the first kind on $[a, b]$.
Note that $q_d(\theta)=F_d(p)(\cos\theta)$, $g(\theta) = p(\cos\theta)h(\cos\theta)$, $\alpha=\arccos(-0.2)$, $\beta=\arccos(0.4)$ and $\theta=\arccos(t)$.
We approximately replace the $L_\infty$-norms of $g'$ and $g''$ by $\norm[L_\infty(a, b)]{p'}$ and $\norm[L_\infty(a, b)]{p'} + \norm[L_\infty(a, b)]{p''}$, respectively.
For each of $t$ and $p$, we plot the true errors $ \vert F_d(p)(t)-p(t)h(t) \vert $ and the corresponding bounds for $d=2, 3, \ldots, 10^4$ in \cref{fig: accuracy of q_d approximating g}.

Clearly, our bounds are order optimal since they
accurately reflect the convergence rates of the true errors as the degree
$d$ increases. Precisely, for $t \not\in [a, b]$ or $\deg p=0$, the convergence is as fast as $(d+2)^{-3}$;
for $t\in[a, b]$ and $\deg p \geq 1$, the convergence rates are
the same as the second terms in the right-hand sides of \cref{eq: accuracy inside,eq: accuracy at alpha}.

\begin{figure}[tbhp]
  \label{fig: accuracy of q_d approximating g}
  \newcommand{\txtH}{0.2\textheight}
  \centering
    \subfigure[{$t = -0.6 \not\in [a, b]$}, $\deg p=0$]{
      \includegraphics[height=\txtH]{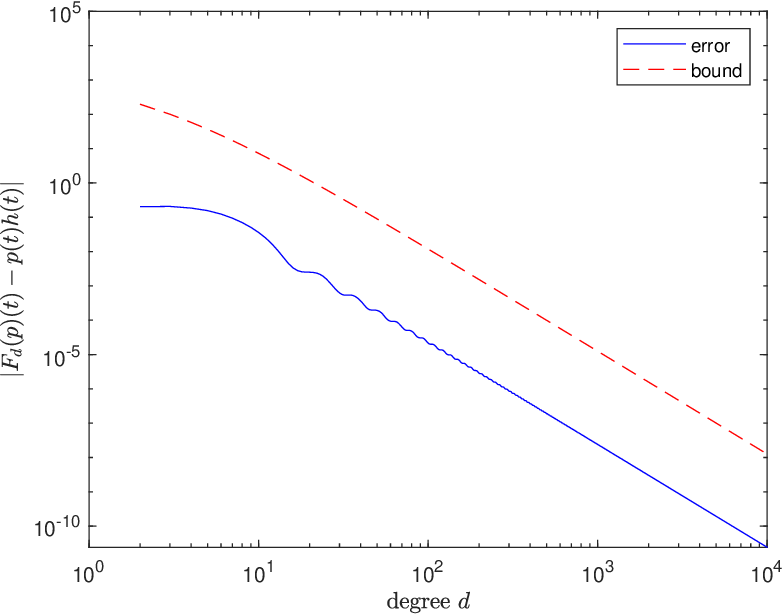}
    }
    \subfigure[{$t = -0.6 \not\in [a, b]$}, $\deg p=1$]{
      \includegraphics[height=\txtH]{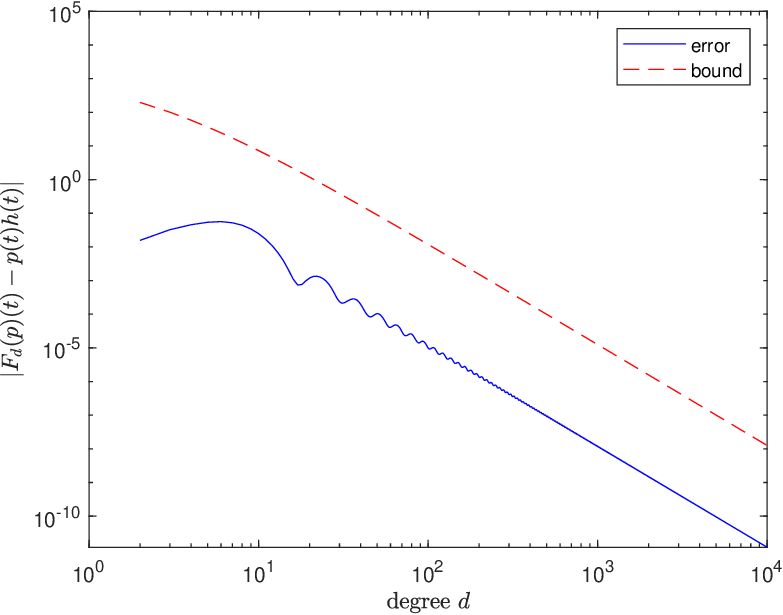}
    }
    \subfigure[$t = 0.1 \in (a, b)$, $\deg p=0$]{
      \includegraphics[height=\txtH]{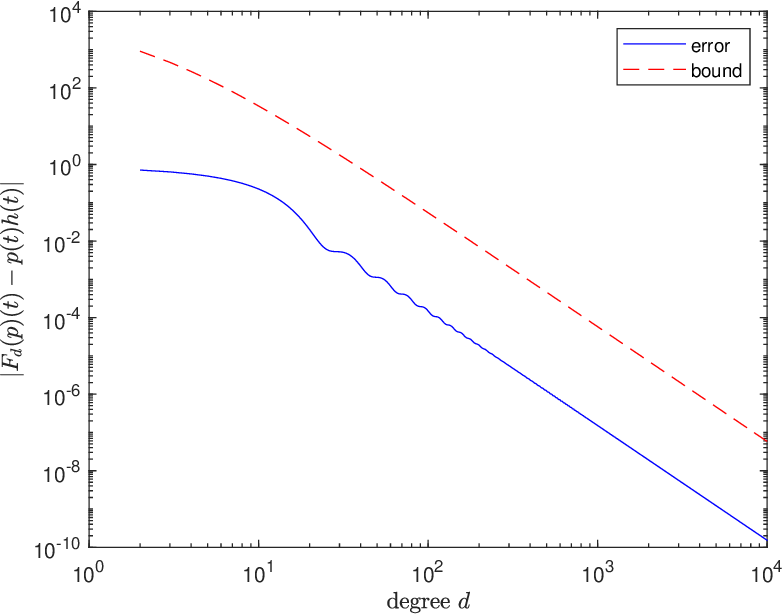}
    }
    \subfigure[$t = 0.1 \in (a, b)$, $\deg p=1$]{
      \includegraphics[height=\txtH]{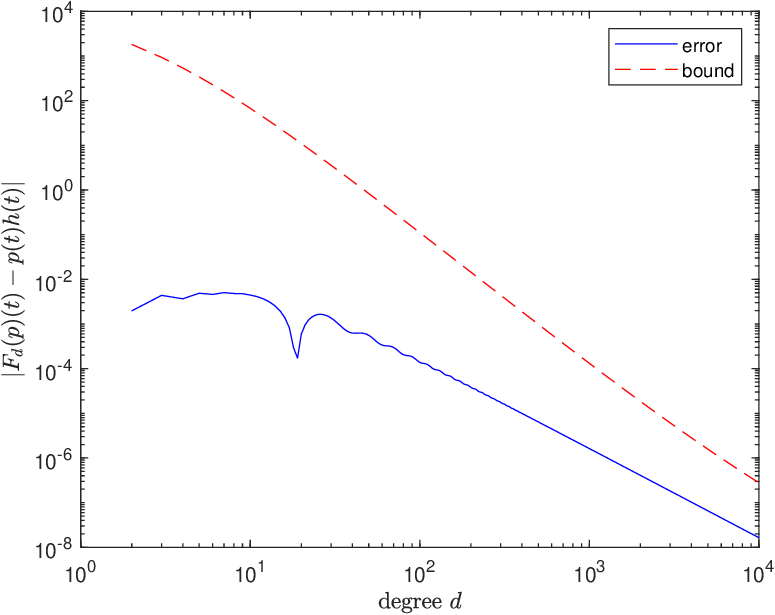}
    }
    \subfigure[$t = -0.2 = a$, $\deg p=0$]{
      \includegraphics[height=\txtH]{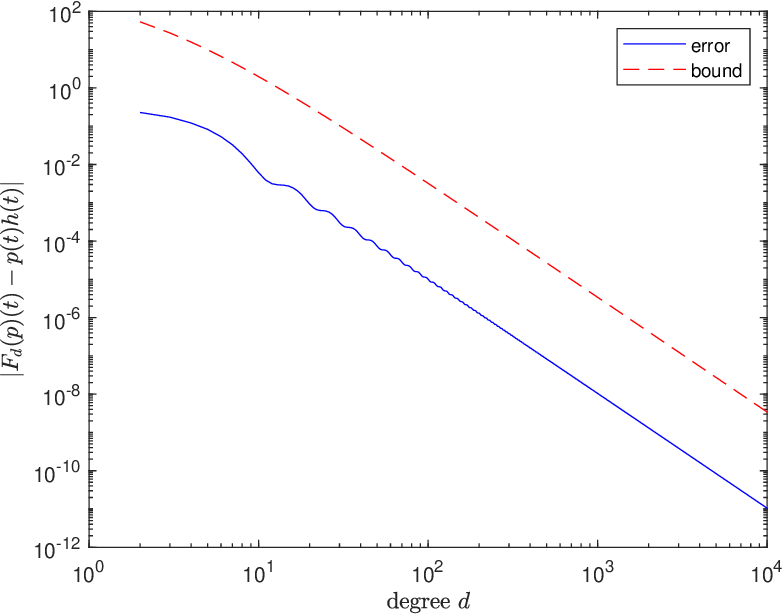}
    }
    \subfigure[$t = -0.2 = a$, $\deg p=1$]{
      \includegraphics[height=\txtH]{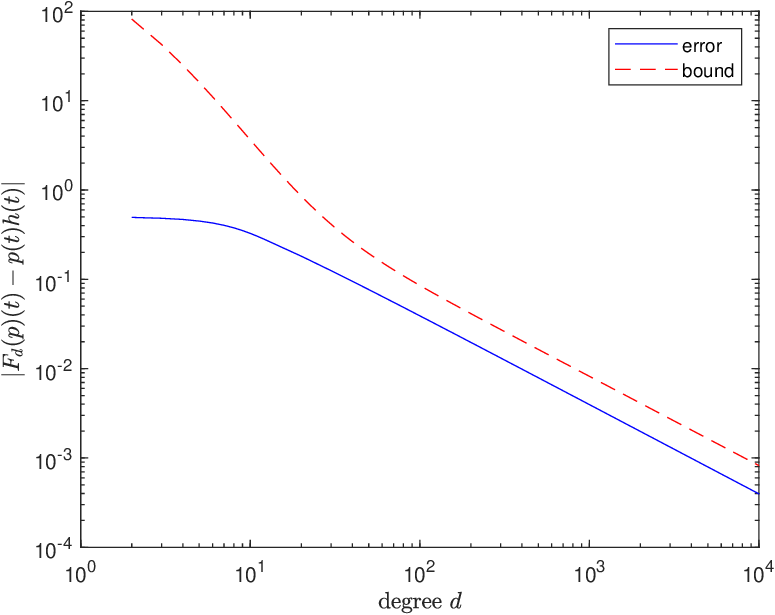}
    }
  \caption{Accuracy of $F_d(p)$ approximating $p\cdot h$.}
\end{figure}

\section{The polynomial approximation in block SS--RR and its convergence}
\label{sec: approximation and convergence}

Although \cref{thm: SS--RR exact} shows that the search subspace $\mathcal{R}(\widehat{S})$ is the desired eigenspace if $\Rank(\widehat{S})=n_{ev}$, the exact computation of integrals $\widehat{S}_k$ in
\eqref{eq: S_k and R(S)} is impractical, so is $p(A)h(A)$. Therefore, we have to consider their approximations and the resulting approximation to $p(A)h(A)$ for $p\in\mathcal{P}_{M-1}$,
and study the convergence of the block SS--RR method.

Given a matrix $V\in\mathbb{R}^{n \times \ell}$, rather than using the rational approximations $S_k$ in
\eqref{eq: numerical quadrature} to construct
an approximation $S$ to $\widehat{S}$ (cf. \eqref{apprS} and \cref{eq: contour integral of S}),
based on the CJ series expansion \eqref{eq: F_d},
we construct a polynomial approximation $S$ to $\widehat{S}$,
where
\begin{equation}\label{eq: polynomial approximation of S}
S=(S_0, S_1, \ldots, S_{M-1})\ \mbox{\ with\ } S_k= F_d(t^k)(A) V \approx A^kh(A)V = \widehat{S}_k.
\end{equation}

Relationships \eqref{eq: S_k and R(S)} and \eqref{eq: polynomial approximation of S}
between $\mathcal{R}(S)$ and the block Krylov subspace $\mathcal{R}(\widehat{S})$ enable us to
establish the following theorem, which is an analogue
of the results on the block Krylov subspace \cite[Lemma 1 and Theorem 2]{jia_block},
where the assumption on the matrix $B$ and the definition of the
index set $N_i$ are exactly the same as those in Lemma 1 of \cite{jia_block} when
the underlying matrix $A$ is symmetric.

\begin{theorem}\label{thm: bound for approx Krylov}
  Let $S$ be defined as \cref{eq: polynomial approximation of S} and $N_i = \{1, 2, \ldots, n\} \setminus \{i, \ldots, i+\ell-1\}$, and suppose that $B = (x_i, \ldots, x_{i+\ell-1})^TV$ is nonsingular, where $x_1,\ldots,x_n$ are the orthonormal eigenvectors of $A$
  corresponding to the eigenvalues $\lambda_1,\ldots,\lambda_n$ labeled in any desired order.
  Then there exists a unique
  $v_i = VB^{-1}e_i \in \Range(V)$ such that
  \begin{equation}\label{vi}
  x_i^Tv_i=1 \ \mbox{\ and\ } x_j^Tv_i=0,j=i+1,\ldots,i+\ell-1
  \end{equation}
   and
  \begin{equation}\label{eq: bound for approx Krylov}
    \tan \angle (\mathcal{R}(S), x_i) \leq \min_{\substack{p \in \mathcal{P}_{M-1} \\ F_d(p)(\lambda_i) \neq 0}} \max_{j \in N_i} \frac{ \vert F_d(p)(\lambda_j) \vert }{ \vert F_d(p)(\lambda_i) \vert } \cdot \tan \angle (v_i, x_i).
  \end{equation}
\end{theorem}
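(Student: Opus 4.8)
The plan is to establish \eqref{eq: bound for approx Krylov} by exhibiting a single vector of $\mathcal{R}(S)$ that is quantitatively close to $x_i$ and then optimizing over the free polynomial $p$. The existence and uniqueness of $v_i$ is immediate: every $v\in\Range(V)$ is of the form $Vc$ with $c\in\mathbb{R}^\ell$, and condition \eqref{vi} amounts to $(x_i,\ldots,x_{i+\ell-1})^Tv = Bc$ being a fixed canonical basis vector; since $B$ is nonsingular (and $V$ has full column rank), $c$, hence $v_i = VB^{-1}e_i$, is uniquely determined.

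The key structural observation is the linearity \eqref{eq: linearity of F_d} of $F_d$: for any $p(t) = \sum_{k=0}^{M-1} a_k t^k \in \mathcal{P}_{M-1}$ we have, by \eqref{eq: polynomial approximation of S}, that $F_d(p)(A)V = \sum_{k=0}^{M-1} a_k F_d(t^k)(A)V = \sum_{k=0}^{M-1} a_k S_k$, so every column of $F_d(p)(A)V$, and in particular the vector $F_d(p)(A)v_i$, belongs to $\mathcal{R}(S)$. Diagonalizing the symmetric matrix as $A = \sum_{j=1}^n \lambda_j x_j x_j^T$ gives $F_d(p)(A)v_i = \sum_{j=1}^n F_d(p)(\lambda_j)(x_j^Tv_i)x_j$, and the relations \eqref{vi} annihilate the terms with $j = i+1,\ldots,i+\ell-1$ while normalizing the $x_i$-term, so that $F_d(p)(A)v_i = F_d(p)(\lambda_i)x_i + \sum_{j\in N_i} F_d(p)(\lambda_j)(x_j^Tv_i)x_j$.

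Assuming $F_d(p)(\lambda_i) \neq 0$, I would set $y = F_d(p)(\lambda_i)^{-1} F_d(p)(A)v_i \in \mathcal{R}(S)$, so that $y = x_i + z$ with $z = \sum_{j\in N_i} \frac{F_d(p)(\lambda_j)}{F_d(p)(\lambda_i)}(x_j^Tv_i)x_j$ orthogonal to $x_i$ (since $i\notin N_i$ and the $x_j$ are orthonormal). Because $\angle(\mathcal{R}(S),x_i)$ is the angle between $x_i$ and its best approximation from $\mathcal{R}(S)$, it follows that $\tan\angle(\mathcal{R}(S),x_i) \le \tan\angle(y,x_i) = \|z\|$; extracting the largest ratio from $\|z\|^2 = \sum_{j\in N_i}\bigl(F_d(p)(\lambda_j)/F_d(p)(\lambda_i)\bigr)^2(x_j^Tv_i)^2$ yields $\|z\| \le \max_{j\in N_i}\frac{|F_d(p)(\lambda_j)|}{|F_d(p)(\lambda_i)|}\bigl\|\sum_{j\in N_i}(x_j^Tv_i)x_j\bigr\|$. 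Finally, Parseval together with \eqref{vi} gives $v_i = x_i + \sum_{j\in N_i}(x_j^Tv_i)x_j$ with $\bigl\|\sum_{j\in N_i}(x_j^Tv_i)x_j\bigr\| = \sqrt{\|v_i\|^2-1} = \tan\angle(v_i,x_i)$; substituting this and then taking the minimum over all $p\in\mathcal{P}_{M-1}$ with $F_d(p)(\lambda_i)\neq 0$ proves \eqref{eq: bound for approx Krylov}.

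This proof is morally a transcription of the block-Krylov estimate \cite[Lemma 1 and Theorem 2]{jia_block}, with $p(\lambda_j)h(\lambda_j)$ replaced everywhere by $F_d(p)(\lambda_j)$, so I do not anticipate a genuine obstacle; the one point that must be handled carefully — and the reason the conclusion is weaker than in the exact SS case — is that $F_d(p)(\lambda_j)$ is only approximately, not exactly, zero for $\lambda_j\notin[a,b]$, which forces the maximum in \eqref{eq: bound for approx Krylov} to range over the full index set $N_i$ instead of only the indices of eigenvalues inside $[a,b]$. The remaining work is routine bookkeeping with $N_i$ and the orthonormality of the eigenvectors.
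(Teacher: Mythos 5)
Your proof is correct and follows essentially the same route as the paper's: both use the linearity of $F_d$ to place $F_d(p)(A)v_i$ in $\mathcal{R}(S)$, expand $v_i = x_i + \sum_{j\in N_i}\xi_j x_j$ in the orthonormal eigenbasis using \eqref{vi}, and bound $\tan\angle(F_d(p)(A)v_i,x_i)$ by extracting the maximal ratio $\vert F_d(p)(\lambda_j)/F_d(p)(\lambda_i)\vert$ before minimizing over $p$. The only difference is that you spell out the existence/uniqueness of $v_i$ from the nonsingularity of $B$, whereas the paper simply cites \cite{jia_block} for that step.
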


\begin{proof}
  Let $p(t) = \beta_0 + \beta_1 t + \cdots + \beta_{M-1} t^{M-1} \in \mathcal{P}_{M-1}$.
  Then by \cref{eq: linearity of F_d}, we have
  \begin{displaymath}
    F_d(p)(A)v_i = \sum_{j=0}^{M-1} \beta_j F_d(t^j)(A) V B^{-1} e_i = \sum_{j=0}^{M-1} \beta_j S_j B^{-1} e_i \in \mathcal{R}(S).
  \end{displaymath}
  Therefore,
  \begin{equation}\label{eq: tangent bound of R(S) & x_i}
    \tan \angle(\mathcal{R}(S), x_i) = \min_{\substack{u\in\mathcal{R}(S) \\ u \neq 0}} \tan \angle(u, x_i) \leq \min_{\substack{p \in \mathcal{P}_{M-1} \\ F_d(p)(\lambda_i) \neq 0}} \tan \angle(F_d(p)(A)v_i, x_i).
  \end{equation}

Relation \eqref{vi} is the restatement of \cite[Lemma 1 and Theorem 2]{jia_block} for $A$ symmetric.
Therefore, we can express $v_i = x_i + \sum_{j \in N_i}\xi_j x_j$, where the two terms are orthogonal and
the coefficients $\xi_j = x_j^Tv_i$.

Since $F_d(p)(A)x_j = F_d(p)(\lambda_j)x_j$, we have
  \begin{displaymath}
    F_d(p)(A)v_i = F_d(p)(\lambda_i) x_i + \sum_{j \in N_i} \xi_j F_d(p)(\lambda_j) x_j,
  \end{displaymath}
where the two terms are orthogonal. As a result,
\begin{displaymath}
    \begin{aligned}
      \tan^2 \angle (F_d(p)(A)v_i, x_i) & = \sum_{j \in N_i} \frac{ \vert \xi_j F_d(p)(\lambda_j) \vert ^2}{ \vert F_d(p)(\lambda_i) \vert ^2} \\
      &\leq \max_{j \in N_i} \frac{ \vert F_d(p)(\lambda_j) \vert ^2}{ \vert F_d(p)(\lambda_i) \vert ^2} \sum_{j \in N_i} \vert \xi_j \vert ^2 \\
      &= \max_{j \in N_i} \frac{ \vert F_d(p)(\lambda_j) \vert ^2}{ \vert F_d(p)(\lambda_i) \vert ^2} \cdot \tan^2 \angle(v_i, x_i).
    \end{aligned}
  \end{displaymath}
  Rhelation \cref{eq: bound for approx Krylov} follows from the above relation and \cref{eq: tangent bound of R(S) & x_i}.
\end{proof}

To analyze the bound in \cref{eq: bound for approx Krylov}, we next exploit \cref{thm: accuracy outside,thm: accuracy inside,thm: accuracy at alpha and beta} to establish error bounds for the difference between $F_d(p)$ and $p \cdot h$ at the eigenvalues of $A$.
Note that these three theorems
concern the error bounds for $ \vert q_d(\theta)-g(\theta) \vert $, while
our ultimate focus is error bounds for $ \vert F_d(p)(\lambda)-p(\lambda)h(\lambda) \vert $.
To this end, we first present the following two lemmas to estimate the $L_{\infty}$-norms involved.

\begin{lemma}
  Suppose $a < b$ and $M \geq 2$. Then for $p \in \mathcal{P}_{M-1}$, it holds that
  \begin{equation}\label{eq: cor of Markov's theorem}
    \norm[L_\infty(a, b)]{p^{(k)}} \leq \left( \frac{2}{b-a} \right)^k T_{M-1}^{(k)}(1) \cdot \norm[L_\infty(a, b)]{p}, \quad k=1, 2, \ldots, M-1,
  \end{equation}
  where
  \begin{displaymath}
    T_{M-1}^{(k)}(1) = \frac{(M-1)^2((M-1)^2-1)\cdots((M-1)^2-(k-1)^2)}{1 \cdot 3 \cdot 5 \cdots (2k-1)}.
  \end{displaymath}
\end{lemma}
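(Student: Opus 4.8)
The plan is to reduce the inequality on $[a,b]$ to the classical Markov brothers' inequality on $[-1,1]$ by the affine change of variable that maps $[a,b]$ onto $[-1,1]$, and then to invoke the explicit value of the $k$-th derivative of the Chebyshev polynomial at the endpoint. Concretely, set $\tau(t) = \frac{2t-(a+b)}{b-a}$, so that $\tau$ maps $[a,b]$ bijectively onto $[-1,1]$ with $\tau'(t)=\frac{2}{b-a}$, and define $\tilde p = p\circ\tau^{-1}\in\mathcal{P}_{M-1}$. Since $\norm[L_\infty(a,b)]{p}=\norm[L_\infty(-1,1)]{\tilde p}$ and, by the chain rule, $p^{(k)}(t) = \bigl(\tfrac{2}{b-a}\bigr)^k \tilde p^{(k)}(\tau(t))$, we get $\norm[L_\infty(a,b)]{p^{(k)}} = \bigl(\tfrac{2}{b-a}\bigr)^k \norm[L_\infty(-1,1)]{\tilde p^{(k)}}$. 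Thus \eqref{eq: cor of Markov's theorem} is equivalent to the statement $\norm[L_\infty(-1,1)]{\tilde p^{(k)}} \le T_{M-1}^{(k)}(1)\,\norm[L_\infty(-1,1)]{\tilde p}$ for all $\tilde p\in\mathcal{P}_{M-1}$.

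The latter is exactly the higher-order Markov inequality (the Markov brothers' inequality, sometimes attributed to V.\ A.\ Markov), which asserts that among all polynomials of degree $\le M-1$ bounded by $1$ on $[-1,1]$, the Chebyshev polynomial $T_{M-1}$ maximizes $\norm[L_\infty(-1,1)]{p^{(k)}}$, and that this maximum is attained at the endpoint $t=1$, with value $T_{M-1}^{(k)}(1)$. Hence the second step is simply to cite this classical result (e.g.\ from the approximation-theory reference \cite{ApproxIntroduction} or a standard source). For completeness one may also record the closed form of $T_{M-1}^{(k)}(1)$: differentiating the trigonometric representation $T_{M-1}(\cos\vartheta)=\cos((M-1)\vartheta)$ term by term, or using the hypergeometric representation of $T_{M-1}$, yields
\begin{displaymath}
  T_{M-1}^{(k)}(1) = \prod_{j=0}^{k-1}\frac{(M-1)^2-j^2}{2j+1},
\end{displaymath}
which is the stated formula. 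Combining the equivalence from the first step with this bound gives \eqref{eq: cor of Markov's theorem}, and the restriction $k\le M-1$ is exactly what guarantees $\tilde p^{(k)}$ is not identically zero in the extremal case and that the formula is meaningful.

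The only genuine obstacle here is that the Markov brothers' inequality is a nontrivial theorem, not an elementary estimate — so the "hard part" is really just deciding how to present it: either invoke it as a black box with a citation (the natural choice given that the paper already leans on \cite{ApproxIntroduction}), or sketch the derivation of the endpoint value $T_{M-1}^{(k)}(1)$ from the explicit coefficients of $T_{M-1}$ while quoting the extremality from the literature. Everything else — the affine rescaling, the chain-rule bookkeeping for the factor $(2/(b-a))^k$, and the identification of the supremum norms before and after the substitution — is routine and carries no subtlety.
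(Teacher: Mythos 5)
Your proposal is correct and follows essentially the same route as the paper: an affine change of variable $l(t)=(2t-a-b)/(b-a)$ mapping $[a,b]$ to $[-1,1]$, the chain rule producing the factor $(2/(b-a))^k$, and an appeal to V.~A.~Markov's theorem on $[-1,1]$ with the endpoint value $T_{M-1}^{(k)}(1)$ (the paper cites \cite{ChebRivlin} for both). No gaps.
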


\begin{proof}
  The value of $T_{M-1}^{(k)}(1)$ sees relation (1.97) of \cite{ChebRivlin}.
  For $a=-1$ and $b=1$, \cref{eq: cor of Markov's theorem} is the Markov's theorem (cf. \cite[p. 123]{ChebRivlin}):
  \begin{displaymath}
    \norm[L_\infty(-1, 1)]{p^{(k)}} \leq T_{M-1}^{(k)}(1) \norm[L_\infty(-1, 1)]{p}.
  \end{displaymath}

  Notice that the linear transformation $l(t) = (2t-a-b)/(b-a)$ maps $[a, b]$ to $[-1, 1]$.
  Then for a given $p \in \mathcal{P}_{M-1}$, we have the one-one correspondence
  $p(t) = \tilde{p}(l(t))$ for some $\tilde{p} \in \mathcal{P}_{M-1}$. Therefore,
  \begin{displaymath}
    p^{(k)}(t) = (l'(t))^k \tilde{p}^{(k)}(l(t)) = \left( \frac{2}{b-a} \right)^k \tilde{p}^{(k)}(l(t)),
  \end{displaymath}
which leads to
\begin{displaymath}
  \norm[L_\infty(a, b)]{p^{(k)}} = \left( \frac{2}{b-a} \right)^k \norm[L_\infty(-1, 1)]{\tilde{p}^{(k)}}.
\end{displaymath}
Applying the Markov's theorem to $\tilde{p}$, we obtain
\begin{displaymath}
  \norm[L_\infty(-1, 1)]{\tilde{p}^{(k)}} \leq T_{M-1}^{(k)}(1) \norm[L_\infty(-1, 1)]{\tilde{p}} = T_{M-1}^{(k)}(1) \norm[L_\infty(a, b)]{p}.
\end{displaymath}
Thus \cref{eq: cor of Markov's theorem} follows from the above two relations.
\end{proof}

\begin{lemma}
  Let $-1 \leq a < b \leq 1$, $M \geq 1$ and $p\in\mathcal{P}_{M-1}$.
  Then
  \begin{equation}\label{eq: p' + p'' < p}
    \norm[L_\infty(a, b)]{p'} + \norm[L_\infty(a, b)]{p''} \leq \frac{4(M-1)^4}{(b-a)^2} \norm[L_\infty(a, b)]{p}.
  \end{equation}
\end{lemma}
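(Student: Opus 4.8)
The plan is to derive \cref{eq: p' + p'' < p} directly from the preceding lemma by specializing \cref{eq: cor of Markov's theorem} to $k=1$ and $k=2$, substituting the explicit values of $T_{M-1}^{(k)}(1)$, and then absorbing the lower-order constant using the bound $b-a\le 2$.

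First I would dispose of the small cases. If $M=1$, then $p$ is constant, so $p'\equiv p''\equiv 0$ and both sides of \cref{eq: p' + p'' < p} vanish. If $M=2$, then $p$ is linear, so $p''\equiv 0$; the preceding lemma with $k=1$ (note $T_1^{(1)}(1)=1$) gives $\norm[L_\infty(a,b)]{p'}\le \frac{2}{b-a}\norm[L_\infty(a,b)]{p}$, and since $b-a\le 2$ we have $\frac{2}{b-a}\le \frac{4}{(b-a)^2}=\frac{4(M-1)^4}{(b-a)^2}$, because $(M-1)^4=1$.

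Now suppose $M\ge 3$. From the formula in the preceding lemma one reads off $T_{M-1}^{(1)}(1)=(M-1)^2$ and $T_{M-1}^{(2)}(1)=\frac{(M-1)^2((M-1)^2-1)}{3}$, so \cref{eq: cor of Markov's theorem} applied with $k=1$ and $k=2$ gives
\begin{align*}
  \norm[L_\infty(a,b)]{p'} &\le \frac{2(M-1)^2}{b-a}\,\norm[L_\infty(a,b)]{p}, \\
  \norm[L_\infty(a,b)]{p''} &\le \frac{4(M-1)^2\bigl((M-1)^2-1\bigr)}{3(b-a)^2}\,\norm[L_\infty(a,b)]{p}.
\end{align*}
Adding these, it remains to verify the scalar inequality
\begin{displaymath}
  \frac{2(M-1)^2}{b-a}+\frac{4(M-1)^2\bigl((M-1)^2-1\bigr)}{3(b-a)^2}\ \le\ \frac{4(M-1)^4}{(b-a)^2}.
\end{displaymath}

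This last step is the only place where a computation is needed, and it is elementary. Writing $N=M-1\ge 2$ and $\delta=b-a\in(0,2]$, I would multiply both sides by $\delta^2/(2N^2)>0$ to reduce the claim to $\delta+\tfrac23(N^2-1)\le 2N^2$; then, using $\delta\le 2$, the left-hand side is at most $2+\tfrac23(N^2-1)=\tfrac43+\tfrac23N^2$, and $\tfrac43+\tfrac23N^2\le 2N^2$ is equivalent to $N^2\ge 1$, which certainly holds. This settles the case $M\ge 3$ and hence the lemma. The only obstacle is bookkeeping---keeping the factors $2/(b-a)$ and $4/(b-a)^2$ and the values $T_{M-1}^{(k)}(1)$ straight---since the two genuine ingredients, Markov's inequality on a general interval and $b-a\le 2$, are already available.
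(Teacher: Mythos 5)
Your proof is correct and follows essentially the same route as the paper: apply the general-interval Markov bound \cref{eq: cor of Markov's theorem} with $k=1,2$, then absorb the lower-order term using $b-a\le 2$ and $(M-1)^2\ge 1$, which is exactly the scalar inequality $b-a+\tfrac{2}{3}\bigl((M-1)^2-1\bigr)\le 2(M-1)^2$ the paper verifies. The only cosmetic difference is that you split off $M=2$ as a separate case, whereas the paper treats all $M\ge 2$ uniformly (your general argument would in fact also cover $M=2$, since $T_1^{(2)}(1)=0$).
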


\begin{proof}
  For $M = 1$, we have $p'(t) \equiv p''(t) \equiv 0$, and thus relation \cref{eq: p' + p'' < p} is
  trivial.
  For $M \geq 2$, we apply \cref{eq: cor of Markov's theorem} to $p$ and get
  \begin{align*}
    \norm[L_\infty(a, b)]{p'} + \norm[L_\infty(a, b)]{p''}
    & \leq \left( \frac{2}{b-a}T_{M-1}'(1) + \frac{4}{(b-a)^2}T_{M-1}''(1) \right) \norm[L_\infty(a, b)]{p} \\
    & = \frac{2(M-1)^2}{(b-a)^2} \left( b-a + \frac{2(M-1)^2-2}{3} \right) \norm[L_\infty(a, b)]{p},
  \end{align*}
  Since $b-a \leq 2$ and $M - 1 \geq 1$, we have
  \begin{displaymath}
    b-a + \frac{2(M-1)^2-2}{3} \leq \frac{2(M-1)^2+4}{3} \leq 2(M-1)^2.
  \end{displaymath}
  From the above two relations, we obtain \cref{eq: p' + p'' < p}.
\end{proof}

With the preceding two lemmas, we have the following theorem.

\begin{theorem}\label{thm: convergence of F(p)}
  Let $\lambda_{ia}, \lambda_{ib}$ and $\lambda_{oa}, \lambda_{ob}$ be the eigenvalues of $A$
  closest to the ends $a$ and $b$ inside and outside $[a, b]$, respectively.
  Define
  \begin{displaymath}
    \Delta_{\min} = \min \{ \vert \alpha-\arccos(\lambda_{ia}) \vert , \vert \alpha-\arccos(\lambda_{oa}) \vert , \vert \beta-\arccos(\lambda_{ib}) \vert , \vert \beta-\arccos(\lambda_{ob}) \vert \},
  \end{displaymath}
  where $\alpha = \arccos(a), \beta = \arccos(b)$.
  Then for $\lambda \in \lambda(A)$ and $d \geq 2$, the error
  \begin{equation}\label{eq: accuracy of eig}
    \left\vert F_d(p)(\lambda) - p(\lambda)h(\lambda) \right\vert \leq \frac{\pi^6 \norm[L_\infty(a, b)]{p}}{\Delta_{\min}^4(d+2)^3} +
    \begin{cases}
      0, & \lambda \not\in [a, b], \\
      \frac{\pi^4 (M-1)^4 \norm[L_\infty(a, b)]{p}}{2(b-a)^2(d+2)^2} , & \lambda \in (a, b), \\
      \frac{\pi^2 (M-1)^2 \norm[L_\infty(a, b)]{p}}{2(b-a)(d+2)}, & \lambda \in \{ a, b \}.
    \end{cases}
  \end{equation}
\end{theorem}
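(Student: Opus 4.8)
The plan is to transfer the three pointwise bounds of \cref{thm: accuracy outside,thm: accuracy inside,thm: accuracy at alpha and beta} from the angular variable $\theta$ to the spectral variable $\lambda$ via the substitution $\theta=\arccos\lambda$. By \cref{eq: F_d,eq: g(theta),eq: q_d(theta)} and $\alpha=\arccos a$, $\beta=\arccos b$ from \cref{alphabeta}, one has $F_d(p)(\lambda)=q_d(\theta)$ and $p(\lambda)h(\lambda)=g(\theta)$, so the quantity to be bounded is $\abs{q_d(\theta)-g(\theta)}$. Since every eigenvalue lies in $[-1,1]$, the three cases $\lambda\notin[a,b]$, $\lambda\in(a,b)$ and $\lambda\in\{a,b\}$ correspond exactly to $\theta\in[0,\pi]\setminus[\beta,\alpha]$, $\theta\in(\beta,\alpha)$ and $\theta\in\{\alpha,\beta\}$, so I would apply \cref{thm: accuracy outside}, \cref{thm: accuracy inside} and \cref{thm: accuracy at alpha and beta}, respectively.

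Two reductions turn the right-hand sides of those theorems into \cref{eq: accuracy of eig}. For the $L_\infty$-norms: because $h$ vanishes off $[a,b]$ and $\abs{h}\le1$, we have $\norm[L_\infty(0,\pi)]{g}\le\norm[L_\infty(a,b)]{p}$; and on $(\beta,\alpha)$, where $g(\theta)=p(\cos\theta)$, the chain rule gives $g'(\theta)=-\sin\theta\,p'(\cos\theta)$ and $g''(\theta)=-\cos\theta\,p'(\cos\theta)+\sin^2\theta\,p''(\cos\theta)$, so $\norm[L_\infty(\beta,\alpha)]{g'}\le\norm[L_\infty(a,b)]{p'}$ and $\norm[L_\infty(\beta,\alpha)]{g''}\le\norm[L_\infty(a,b)]{p'}+\norm[L_\infty(a,b)]{p''}$. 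Substituting these into the second terms of \cref{eq: accuracy inside,eq: accuracy at alpha,eq: accuracy at beta} and invoking \cref{eq: cor of Markov's theorem} with $k=1$ (hence $T_{M-1}'(1)=(M-1)^2$) and \cref{eq: p' + p'' < p} reproduces precisely the $(d+2)^{-1}$ and $(d+2)^{-2}$ terms of \cref{eq: accuracy of eig}. For the denominators: I would bound $\Delta_\theta$, and the auxiliary $\Delta_\alpha,\Delta_\beta$ of \cref{thm: accuracy at alpha and beta}, below by $\Delta_{\min}$. For instance, when $\lambda\in(a,b)$, monotonicity of $\arccos$ and the fact that $\lambda_{ia}$ and $\lambda_{ib}$ are the smallest and largest eigenvalues in $(a,b)$ give $\arccos\lambda_{ib}\le\theta\le\arccos\lambda_{ia}$, hence $\Delta_\theta\ge\min\{\alpha-\arccos\lambda_{ia},\,\arccos\lambda_{ib}-\beta\}\ge\Delta_{\min}$; the cases $\lambda<a$, $\lambda>b$ are handled the same way using $\lambda_{oa}$, $\lambda_{ob}$, and in the endpoint case one checks that $\Delta_\alpha=\min\{\alpha-\beta,2\pi-2\alpha\}$ and $\Delta_\beta=\min\{\alpha-\beta,2\beta\}$ dominate $\Delta_{\min}$ because $\arccos\lambda_{ia},\arccos\lambda_{ib}\in(\beta,\alpha)$, $\arccos\lambda_{oa}\le\pi\le2\pi-\alpha$ and $0\le\arccos\lambda_{ob}<\beta$. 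After replacing $\Delta_\theta,\Delta_\alpha,\Delta_\beta$ by $\Delta_{\min}$ and using that the leading constants $\tfrac12$ in \cref{eq: accuracy outside} and $\tfrac34$ in \cref{eq: accuracy at alpha,eq: accuracy at beta} are at most $1$, all first terms collapse into the single term $\pi^6\norm[L_\infty(a,b)]{p}\,\Delta_{\min}^{-4}(d+2)^{-3}$, and \cref{eq: accuracy of eig} follows.

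I expect the denominator bookkeeping to be the only nonroutine step: one must verify that the point $\theta=\arccos\lambda$ attached to an arbitrary eigenvalue $\lambda$ can never approach $\{\alpha,\beta\}$ more closely than the four boundary quantities that define $\Delta_{\min}$, and, in the case $\theta\in\{\alpha,\beta\}$, that the distances $\Delta_\alpha,\Delta_\beta$ appearing in \cref{thm: accuracy at alpha and beta} are themselves no smaller than $\Delta_{\min}$. Everything else is a mechanical substitution into the three pointwise convergence theorems together with the two Markov-type lemmas preceding this theorem.
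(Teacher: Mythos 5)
Your proposal is correct and follows essentially the same route as the paper's proof: substitute $\theta=\arccos\lambda$ into the three pointwise theorems, bound $\Delta_\theta,\Delta_\alpha,\Delta_\beta$ below by $\Delta_{\min}$, and convert the $L_\infty$-norms of $g'$ and $g''$ to norms of $p$ via the chain rule and the two Markov-type lemmas. The denominator bookkeeping you flag as the nonroutine step is exactly what the paper asserts (without elaboration) as $\Delta_{\min}\leq\min\{\Delta_\theta,\Delta_\alpha,\Delta_\beta\}$, and your verification of it is sound.
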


\begin{proof}
  Let $\lambda$ be an eigenvalue of $A$, and denote $\theta=\arccos(\lambda)$.
  We obtain
  \begin{displaymath}
    \Delta_{\min} \leq \min\{ \Delta_\theta, \Delta_\alpha, \Delta_\beta \}.
  \end{displaymath}
It follows from \cref{thm: accuracy outside,thm: accuracy inside,thm: accuracy at alpha and beta} that
  \begin{equation}\label{eq: accurary of eig with theta}
    \left\vert F_d(p)(\lambda) - p(\lambda)h(\lambda) \right\vert \leq
    \frac{\pi^6 \norm[L_\infty(0, \pi)]{g}}{\Delta_{\min}^4(d+2)^3} +
    \begin{cases}
      0, & \lambda \not\in [a, b], \\
      \frac{\pi^4}{8(d+2)^2} \norm[L_\infty(\beta, \alpha)]{g''}, & \lambda \in (a, b), \\
      \frac{\pi^2}{4(d+2)} \norm[L_\infty(\beta, \alpha)]{g'}, & \lambda \in \{ a, b \},
    \end{cases}
  \end{equation}
  where $g(\theta) = p(\cos(\theta))h(\cos(\theta)) = p(\lambda)h(\lambda)$.

  Next, we investigate sizes of the $L_\infty$-norms of the two derivatives in \eqref{eq: accurary of eig with theta}.
  By definition \cref{eq: g(theta)}, we have $\norm[L_\infty(\beta, \alpha)]{g'} = \norm[L_\infty(\beta, \alpha)]{g''} = 0$ for $M = 1$ and
  \begin{equation}\label{eq: g < p}
    \norm[L_\infty(0, \pi)]{g} = \sup_{\phi \in (\beta, \alpha)} \vert p(\cos\phi) \vert = \sup_{t \in (a, b)} \vert p(t) \vert = \norm[L_\infty(a, b)]{p},
  \end{equation}
  and these immediately yield \cref{eq: accuracy of eig} when $M = 1$. Consequently, we only need to consider
  the case $M \geq 2$.
  For $\phi \in (\beta, \alpha)$, by definition \eqref{eq: g(theta)}, we have
  \begin{displaymath}
    g'(\phi) = -p'(\cos\phi)\sin\phi, \quad g''(\phi) = p''(\cos\phi)\sin^2\phi - p'(\cos\phi)\cos\phi,
  \end{displaymath}
  which, together with \cref{eq: cor of Markov's theorem,eq: p' + p'' < p}, yield
  \begin{equation}\label{eq: g' & g'' < p}
    \begin{aligned}
      & \norm[L_\infty(\beta, \alpha)]{g'} \leq \norm[L_\infty(a, b)]{p'} \leq \frac{2(M-1)^2}{b-a} \norm[L_\infty(a, b)]{p}, \\
      & \norm[L_\infty(\beta, \alpha)]{g''} \leq \norm[L_\infty(a, b)]{p'} + \norm[L_\infty(a, b)]{p''} \leq \frac{4(M-1)^4}{(b-a)^2} \norm[L_\infty(a, b)]{p}.
    \end{aligned}
  \end{equation}
  Substituting \cref{eq: g < p,eq: g' & g'' < p} into \cref{eq: accurary of eig with theta} proves \cref{eq: accuracy of eig}.
\end{proof}

Since the ratio $\vert F_d(p)(\lambda_j) / F_d(p)(\lambda_i) \vert$ is independent of the scaling of $p$, i.e.,
$$
  \vert F_d(c \cdot p)(\lambda_j) / F_d(c \cdot p)(\lambda_i) \vert = \vert F_d(p)(\lambda_j) / F_d(p)(\lambda_i) \vert
$$
for any real number $c\not=0$, it is insightful to obtain more from \Cref{thm: convergence of F(p)}: Define
\begin{equation} \label{eq: param: raletive errors respect to p}
  \begin{aligned}
    \gamma_i(p) &= \frac{\pi^6}{\Delta_{\min}^4(d+2)^3} \frac{\norm[L_\infty(a,b)]{p}}{\vert p(\lambda_i) \vert}, \quad
    \delta_i(p)  = \left( \frac{\pi^2(M-1)^2}{(b-a)(d+2)} \right)^2 \frac{\norm[L_\infty(a,b)]{p}}{2 \vert p(\lambda_i) \vert}, \\
    \eta_i(p)   &= \frac{\pi^2(M-1)^2}{(b-a)(d+2)} \frac{\norm[L_\infty(a,b)]{p}}{2 \vert p(\lambda_i) \vert},
  \end{aligned}
\end{equation}
where $\Delta_{\min}$ is defined as in \Cref{thm: convergence of F(p)}. Then
\begin{equation} \label{eq: raletive errors of F_d(p)}
  \begin{aligned}
    \frac{\vert F_d(p)(\lambda_j) \vert}{\vert p(\lambda_i) \vert}
    & \leq h(\lambda_j) \frac{\vert p(\lambda_j) \vert}{\vert p(\lambda_i) \vert} + \gamma_i(p) + \begin{cases}
      0,         & \lambda_j \not\in [a, b], \\
      \delta_i(p), & \lambda_j \in (a, b), \\
      \eta_i(p),   & \lambda_j \in \{a, b\},
    \end{cases} \\
    \frac{\vert F_d(p)(\lambda_i) \vert}{\vert p(\lambda_i) \vert}
    & \geq \begin{cases}
      1 - \gamma_i(p) - \delta_i(p), & \lambda_i \in (a, b), \\
      \frac{1}{2} - \gamma_i(p) - \eta_i(p), & \lambda_i \in \{a, b\}, \\
    \end{cases}
  \end{aligned}
\end{equation}
where $\lambda_i \in [a, b]$.
We see from \cref{eq: param: raletive errors respect to p} that
the parameters $\gamma_i(p)$, $\delta_i(p)$ and $\eta_i(p)$ tend to zero as $d$ increases.
Thus for sufficiently large $d$, the ratio $\vert F_d(p)(\lambda_j) / F_d(p)(\lambda_i) \vert$ is governed by $h(\lambda_j) \vert p(\lambda_j) / p(\lambda_i) \vert$, which vanishes for $\lambda_j \not\in [a, b]$.
In this case, we can present the following result,
which is analogous to \cite[Theorem 3]{jia_block} but is stated in a slightly different form.

\begin{theorem} \label{thm: polynomial's bound of block Krylov}
  Assume that $b \geq \lambda_1 \geq \cdots \geq \lambda_{n_{ev}} \geq a$ and the multiplicities
   of $\lambda_i,\,i=1,\ldots,n_{ev}$ do not exceed $\ell$.
  Given $1 \leq i \leq n_{ev}$, let $\Xi_i$ be the set of the distinct ones among $\lambda_1, \ldots, \lambda_{i-1}$, and suppose that $\lambda_{i-1} \neq \lambda_i$ for $i > 1$ and $ \abs{\Xi_i} \leq M-1$, where $ \abs{\Xi_i} $ denotes the cardinality of $\Xi_i$.
  Define
  \begin{align*}
    & \sigma_i = 1 + \frac{2(\lambda_i-\lambda_{i+\ell})}{\lambda_{i+\ell} - a}, \quad
    m_i(t) = \begin{cases}
      \prod_{\lambda_j \in \Xi_i} (t-\lambda_j), & i \neq 1, \\
      1, & i = 1,
    \end{cases} \\
    & \kappa_i = \frac{\norm[L_\infty(a, b)]{m_i}}{ \vert m_i(\lambda_i) \vert }, \quad
    \varepsilon_i = \begin{cases}
      \kappa_i / T_{M - \abs{\Xi_i} - 1}(\sigma_i), & i=1,\ldots, n_{ev}-\ell, \\
      0, & i=n_{ev}-\ell+1,\ldots,n_{ev}.
      \end{cases}
  \end{align*}
  Then
  \begin{equation}\label{ratiovalue}
    \min_{\substack{p \in \mathcal{P}_{M-1} \\ p(\lambda_i) \neq 0}} \max_{\substack{1 \leq j \leq n_{ev} \\ j \not\in \{i, \ldots, i+\ell-1\}}} \frac{ \vert p(\lambda_j) \vert }{ \vert p(\lambda_i) \vert } \leq \varepsilon_i.
  \end{equation}
\end{theorem}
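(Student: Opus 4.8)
The plan is to prove \eqref{ratiovalue} by exhibiting, for each admissible $i$, one explicit feasible polynomial $p\in\mathcal{P}_{M-1}$ with $p(\lambda_i)\neq0$ for which $\max_j\abs{p(\lambda_j)/p(\lambda_i)}\le\varepsilon_i$; since the left-hand side of \eqref{ratiovalue} is the minimum of this quantity over all feasible $p$, any such construction suffices. I would take $p$ in the product form $p=m_i\cdot r$, where the factor $m_i$ defined in the statement annihilates every eigenvalue $\lambda_j$ with $j<i$, and $r$ is a shifted and scaled Chebyshev polynomial that is uniformly small on an interval containing all remaining unwanted eigenvalues $\lambda_j$, $j\ge i+\ell$, while taking the value $1$ at $\lambda_i$. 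This is the standard Chebyshev-extremality device behind Krylov convergence bounds, here applied in the block setting exactly as in \cite[Theorem 3]{jia_block}.

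First I would dispose of the range $i=n_{ev}-\ell+1,\ldots,n_{ev}$, where $\varepsilon_i=0$. In this range $\{i,\ldots,i+\ell-1\}\supseteq\{i,\ldots,n_{ev}\}$, so the index set over which the maximum in \eqref{ratiovalue} is taken collapses to $\{1,\ldots,i-1\}$. Because $\lambda_{i-1}\neq\lambda_i$ for $i>1$, every $\lambda_j$ with $j<i$ satisfies $\lambda_j>\lambda_i$ and hence belongs to $\Xi_i$; taking $p=m_i$, which has degree $\abs{\Xi_i}\le M-1$ and satisfies $m_i(\lambda_i)\neq0$, makes the maximum vanish, matching $\varepsilon_i=0$ (for $i=1$ one simply takes $p\equiv1$).

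For $1\le i\le n_{ev}-\ell$ I would argue as follows. The assumption that no multiplicity exceeds $\ell$ forces $\lambda_{i+\ell}<\lambda_i$, so the affine map $l_i$ carrying $[a,\lambda_{i+\ell}]$ onto $[-1,1]$ sends $\lambda_i$ to $l_i(\lambda_i)=\sigma_i>1$, a point outside $[-1,1]$. Set
\begin{displaymath}
  r(t)=\frac{T_{M-\abs{\Xi_i}-1}\bigl(l_i(t)\bigr)}{T_{M-\abs{\Xi_i}-1}(\sigma_i)},\qquad p=m_i\cdot r .
\end{displaymath}
Then $\deg p=\abs{\Xi_i}+\bigl(M-\abs{\Xi_i}-1\bigr)=M-1$, so $p\in\mathcal{P}_{M-1}$, and $p(\lambda_i)=m_i(\lambda_i)\,r(\lambda_i)\neq0$ because $r(\lambda_i)=1$ and $\lambda_i\notin\Xi_i$. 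For the unwanted indices $j\in\{1,\ldots,n_{ev}\}\setminus\{i,\ldots,i+\ell-1\}$ there are two cases. If $j<i$, then $\lambda_j\in\Xi_i$ exactly as before, so $p(\lambda_j)=0$. If $i+\ell\le j\le n_{ev}$, then $a\le\lambda_j\le\lambda_{i+\ell}\le b$, whence $l_i(\lambda_j)\in[-1,1]$, so $\abs{r(\lambda_j)}\le1/T_{M-\abs{\Xi_i}-1}(\sigma_i)$, while $\abs{m_i(\lambda_j)}\le\norm[L_\infty(a,b)]{m_i}$ since $\lambda_j\in[a,b]$; dividing by $\abs{p(\lambda_i)}=\abs{m_i(\lambda_i)}$ gives $\abs{p(\lambda_j)/p(\lambda_i)}\le\kappa_i/T_{M-\abs{\Xi_i}-1}(\sigma_i)=\varepsilon_i$. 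Taking the maximum over $j$ proves \eqref{ratiovalue}.

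The step I expect to require the most care is verifying the chain $a\le\lambda_j\le\lambda_{i+\ell}\le b$ for every $j\ge i+\ell$: the lower bound rests on $\lambda_{n_{ev}}\ge a$, the upper bound on the monotone labeling $b\ge\lambda_1\ge\cdots\ge\lambda_{n_{ev}}$, and both are valid only because $j\le n_{ev}$, which is precisely why the regime $i>n_{ev}-\ell$ must be handled separately. One should also check that $\abs{\Xi_i}\le M-1$ (assumed) makes the Chebyshev factor well defined and that $\sigma_i>1$ (from the multiplicity bound) guarantees $T_{M-\abs{\Xi_i}-1}(\sigma_i)\ge1>0$, so that $r$, hence $p$, does not vanish at $\lambda_i$. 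Beyond these points the argument is routine.
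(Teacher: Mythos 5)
Your proof is correct and follows essentially the same route as the paper: the case $i=n_{ev}-\ell+1,\ldots,n_{ev}$ is handled identically by taking $p=m_i$, and for $i\le n_{ev}-\ell$ you reconstruct in full the Chebyshev construction $p=m_i\cdot T_{M-\abs{\Xi_i}-1}(l_i(\cdot))/T_{M-\abs{\Xi_i}-1}(\sigma_i)$ that the paper simply cites from \cite[Theorem 3]{jia_block} (and reuses explicitly in the proof of \cref{thm: upper bound for approx Krylov}). No gaps; your added care about $\sigma_i>1$ and the chain $a\le\lambda_j\le\lambda_{i+\ell}\le b$ is exactly the right thing to check.
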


\begin{proof}
  The case $i = 1, \ldots, n_{ev}-\ell$ has been proved in \cite[Theorem 3]{jia_block}.
  Here we only need to show the case $i = n_{ev}-\ell+1, \ldots, n_{ev}$.
  For $i=n_{ev}-\ell+1,\ldots,n_{ev}$, there is $j \in \{1, \ldots, n_{ev}\} \setminus \{i, \ldots, i+\ell-1\} = \{1, \ldots, i-1\}$ so that $\lambda_j \in \Xi_i$.
  In this case, taking $p(t) = m_i(t)$ gives
  \begin{displaymath}
    \max_{\substack{1 \leq j \leq n_{ev} \\ j \not\in \{i, \ldots, i+\ell-1\}}} \frac{ \vert p(\lambda_j) \vert }{ \vert p(\lambda_i) \vert } = \max_{\lambda_j \in \Xi_i} \frac{ \vert m_i(\lambda_j) \vert }{ \vert m_i(\lambda_i) \vert } = 0 = \varepsilon_i.
  \end{displaymath}
  Therefore, \eqref{ratiovalue} holds for $1\leq i\leq n_{ev}$.
\end{proof}

With the preceding theorems, we can establish the following estimate for the bound in \cref{eq: bound for approx Krylov}.
From \cref{eq: raletive errors of F_d(p)}, we can see that for $\lambda_j \not\in [a, b]$ the upper bound of $\vert F_d(p)(\lambda_j) / p(\lambda_i) \vert$ is $\gamma_i(p)$, independently of the position of $\lambda_j$.
Thus for the concerning interval eigenvalue problem of $A$, we label the eigenvalues of $A$ inside $[a, b]$ in the same order as in \cref{thm: polynomial's bound of block Krylov} and those outside $[a, b]$ at will.

\begin{theorem} \label{thm: upper bound for approx Krylov}
  Label the eigenvalues of $A$ in $[a, b] \subset [\lambda_{\min}, \lambda_{\max}]$ as $\lambda_1, \ldots, \lambda_{n_{ev}}$ with $b \geq \lambda_1 \geq \cdots \geq \lambda_{n_{ev}} \geq a$, suppose that the multiplicity of each of them does not exceed $\ell$, and write eigenvalues outside $[a,b]$ as $\lambda_{n_{ev}+1},\ldots,\lambda_n$ at will.
  Given $1 \leq i \leq n_{ev}$, assume that $\lambda_{i-1} \neq \lambda_i$ for $i > 1$ and the cardinality $ \abs{\Xi_i} \leq M-1$, where $\Xi_i$ is the set of the distinct eigenvalues among $\lambda_1, \ldots, \lambda_{i-1}$.
  Define $\sigma_i$, $m_i(t)$, $\kappa_i$ and $\varepsilon_i$ as in \cref{thm: polynomial's bound of block Krylov}, and
  \begin{align}
    & \nonumber
    \tau_i = \begin{cases}
      \varepsilon_i \cdot T_{M - \abs{\Xi_i} - 1}\left(1 + \frac{2(b-\lambda_{i+\ell})}{\lambda_{i+\ell}-a}\right) , & i=1,\ldots, n_{ev}-\ell, \\
      \kappa_i, & i=n_{ev}-\ell+1,\ldots,n_{ev},
    \end{cases} \\
    & \label{eq: param: raletive errors}
    \hat{\gamma}_i = \frac{\pi^6 \tau_i}{\Delta_{\min}^4(d+2)^3}, \quad
    \hat{\delta}_i = \left( \frac{\pi^2(M-1)^2}{(b-a)(d+2)} \right)^2 \frac{\tau_i}{2}, \quad
    \hat{\eta}_i = \frac{\pi^2(M-1)^2}{(b-a)(d+2)} \frac{\tau_i}{2},
  \end{align}
  where $\Delta_{\min}$ is defined as in \cref{thm: convergence of F(p)}.
  Then for sufficiently large CJ series degree $d$
  such that $\nu_i$ defined in \eqref{consts} is uniformly positive, we have
  \begin{equation} \label{eq: bound for approx Krylov bound}
    \min_{\substack{p \in \mathcal{P}_{M-1} \\ F_d(p)(\lambda_i) \neq 0}} \max_{j \in N_i} \frac{ \vert F_d(p)(\lambda_j) \vert }{ \vert F_d(p)(\lambda_i) \vert }
    \leq
    \frac{\mu_i}{\nu_i},
  \end{equation}
  with $N_i = \{1, 2, \ldots, n\} \setminus \{i, \ldots, i+\ell-1\}$ and
  \begin{equation} \label{consts}
    \mu_i = \hat{\gamma}_i + \max\left\{ \varepsilon_i + \hat{\delta}_i, \frac{\varepsilon_i}{2} + \hat{\eta}_i \right\}, \quad \nu_i = \begin{cases}
      1 - \hat{\gamma}_i - \hat{\delta}_i, &\lambda_i \in (a, b), \\
      \frac{1}{2} - \hat{\gamma}_i - \hat{\eta}_i, &\lambda_i \in \{a, b\}.
    \end{cases}
  \end{equation}
\end{theorem}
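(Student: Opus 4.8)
The plan is to exhibit an explicit near‑minimizer $p^\ast\in\mathcal{P}_{M-1}$ for the ratio in \cref{eq: bound for approx Krylov bound} and then to bound $\max_{j\in N_i}\abs{F_d(p^\ast)(\lambda_j)}$ from above and $\abs{F_d(p^\ast)(\lambda_i)}$ from below — both relative to $\abs{p^\ast(\lambda_i)}$ — by means of the scale‑invariant estimates \cref{eq: raletive errors of F_d(p)}, which themselves follow from \cref{thm: convergence of F(p)}. For $p^\ast$ I would take exactly the polynomial underlying the proof of \cref{thm: polynomial's bound of block Krylov} (see \cite[Theorem 3]{jia_block}): for $i\le n_{ev}-\ell$ set $p^\ast(t)=m_i(t)\,T_{M-\abs{\Xi_i}-1}(l_i(t))$, where $l_i$ maps $[a,\lambda_{i+\ell}]$ affinely onto $[-1,1]$, and for $i>n_{ev}-\ell$ set $p^\ast(t)=m_i(t)$. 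In either case $\deg p^\ast\le M-1$ (using $\abs{\Xi_i}\le M-1$) and $p^\ast(\lambda_i)\neq 0$.

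Two properties of $p^\ast$ carry the argument: (i) $\max_{1\le j\le n_{ev},\,j\notin\{i,\dots,i+\ell-1\}}\abs{p^\ast(\lambda_j)/p^\ast(\lambda_i)}\le\varepsilon_i$; and (ii) $\norm[L_\infty(a,b)]{p^\ast}/\abs{p^\ast(\lambda_i)}\le\tau_i$. Property (i) is \cref{thm: polynomial's bound of block Krylov} attained by this $p^\ast$. Property (ii) holds by definition when $i>n_{ev}-\ell$, since there $\tau_i=\kappa_i$; for $i\le n_{ev}-\ell$ one factors $\norm[L_\infty(a,b)]{p^\ast}\le\norm[L_\infty(a,b)]{m_i}\cdot\sup_{t\in[a,b]}\abs{T_{M-\abs{\Xi_i}-1}(l_i(t))}$, observes that $l_i([a,b])\subseteq[-1,\,1+2(b-\lambda_{i+\ell})/(\lambda_{i+\ell}-a)]$ with $\lambda_{i+\ell}<b$ (otherwise $\lambda_1=\dots=\lambda_{i+\ell}=b$ would force $b$ to have multiplicity exceeding $\ell$), and uses that $T_{M-\abs{\Xi_i}-1}$ is bounded by $1$ on $[-1,1]$ and increasing on $[1,\infty)$, so the supremum equals $T_{M-\abs{\Xi_i}-1}(1+2(b-\lambda_{i+\ell})/(\lambda_{i+\ell}-a))$; dividing by $\abs{p^\ast(\lambda_i)}=\abs{m_i(\lambda_i)}\,T_{M-\abs{\Xi_i}-1}(\sigma_i)$ and recalling $\varepsilon_i=\kappa_i/T_{M-\abs{\Xi_i}-1}(\sigma_i)$ reproduces $\tau_i$.

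Feeding (i) and (ii) into \cref{eq: raletive errors of F_d(p)}: property (ii) gives $\gamma_i(p^\ast)\le\hat\gamma_i$, $\delta_i(p^\ast)\le\hat\delta_i$ and $\eta_i(p^\ast)\le\hat\eta_i$. For $j\in N_i$ the numerator bound in \cref{eq: raletive errors of F_d(p)} equals $\hat\gamma_i$ when $\lambda_j\notin[a,b]$ (there $h(\lambda_j)=0$), is at most $\varepsilon_i+\hat\gamma_i+\hat\delta_i$ when $\lambda_j\in(a,b)$, and is at most $\frac12\varepsilon_i+\hat\gamma_i+\hat\eta_i$ when $\lambda_j\in\{a,b\}$, using property (i) in the last two cases; since $\hat\gamma_i$ alone is no larger than either of the other two quantities, the maximum over $j\in N_i$ is at most $\mu_i$. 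Because $\lambda_i\in[a,b]$, the second half of \cref{eq: raletive errors of F_d(p)} together with property (ii) gives $\abs{F_d(p^\ast)(\lambda_i)}/\abs{p^\ast(\lambda_i)}\ge\nu_i$, which is positive by hypothesis; in particular $F_d(p^\ast)(\lambda_i)\neq0$, so $p^\ast$ is admissible in the minimization. Dividing the numerator bound by the denominator bound and then minimizing over admissible $p$ yields \cref{eq: bound for approx Krylov bound}.

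I expect the main obstacle to be bookkeeping rather than depth: one must keep the three regimes $\lambda_j\notin[a,b]$, $\lambda_j\in(a,b)$, $\lambda_j\in\{a,b\}$ (and the analogous split for $\lambda_i$) straight, and, in the case $i\le n_{ev}-\ell$, carefully justify that the $L_\infty$-norm over $[a,b]$ of the shifted Chebyshev factor is attained at the endpoint $b$ — the one place where the multiplicity hypothesis, via $\lambda_{i+\ell}<b$, genuinely enters — as well as confirm that $p^\ast\in\mathcal{P}_{M-1}$. The scale‑invariance of $\abs{F_d(p)(\lambda_j)/F_d(p)(\lambda_i)}$ and the pre‑assembled estimates \cref{eq: raletive errors of F_d(p)} do the rest.
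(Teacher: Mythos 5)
Your proposal is correct and follows essentially the same route as the paper's proof: the same test polynomial $p^\ast = m_i\cdot T_{M-\abs{\Xi_i}-1}(\hat l)$ (resp.\ $m_i$ for $i>n_{ev}-\ell$), the same two properties bounding $\abs{p^\ast(\lambda_j)/p^\ast(\lambda_i)}$ by $\varepsilon_i$ and $\norm[L_\infty(a,b)]{p^\ast}/\abs{p^\ast(\lambda_i)}$ by $\tau_i$, and the same substitution into \cref{eq: raletive errors of F_d(p)} to obtain the numerator bound $\mu_i$ and denominator bound $\nu_i$. The only additions are harmless bookkeeping remarks (e.g.\ on $\lambda_{i+\ell}<b$) that the paper leaves implicit.
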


\begin{proof}
  \emph{Case I}: $i=1, \ldots, n_{ev}-\ell$.
  For $j \in N_i$, there are three possibilities: $1 \leq j \leq i-1$, $i+\ell \leq j \leq n_{ev}$ and $j > n_{ev}$, which correspond to the cases $\lambda_j \in \Xi_i$, $\lambda_j \in [a, \lambda_{i+\ell}]$ and $\lambda_j \not\in [a, b]$, respectively.
  Let $\hat{l}(t) = 1 + \frac{2(t-\lambda_{i+\ell})}{\lambda_{i+\ell}-a}$ be a linear transformation mapping $[a, \lambda_{i+\ell}]$ to $[-1, 1]$ and
  \begin{displaymath}
    p(t) = m_i(t)T_{M - \abs{\Xi_i} - 1}(\hat{l}(t)).
  \end{displaymath}
  Then $\vert T_{M - \abs{\Xi_i} - 1}(\hat{l}(t)) \vert \leq 1$ for $t \in [a, \lambda_{i+\ell}]$, $\vert T_{M - \abs{\Xi_i} - 1}(\hat{l}(t)) \vert \leq T_{M - \abs{\Xi_i} - 1}(\hat{l}(b))$ for $t \in [a, b]$, and $\hat{l}(\lambda_i) = \sigma_i$.
  Therefore, for $\lambda_j \in [a, \lambda_{i+\ell}]$, we obtain
  \begin{displaymath}
    \frac{\abs{p(\lambda_j)}}{\abs{p(\lambda_i)}} \leq \frac{ \norm[L_\infty(a, b)]{m_i} \cdot \abs{T_{M - \abs{\Xi_i} - 1}(\hat{l}(\lambda_j))} }{ \abs{m_i(\lambda_i)} \cdot T_{M - \abs{\Xi_i} - 1}(\sigma_i) } \leq \varepsilon_i.
  \end{displaymath}
  Since $p(\lambda_j) = 0$ for $\lambda_j \in \Xi_i$ and $h(\lambda_j) = 0$ for $\lambda_j \not\in [a, b]$, we have
  \begin{equation} \label{eq: dominant term of ratio of F_d(p)}
    h(\lambda_j) \frac{\abs{p(\lambda_j)}}{\abs{p(\lambda_i)}} \leq h(\lambda_j) \varepsilon_i \quad \text{for all } j \in N_i.
  \end{equation}
  Notice that the only difference between \cref{eq: param: raletive errors respect to p} and \cref{eq: param: raletive errors} is the substitution of $\frac{\norm[L_\infty(a, b)]{p}}{\abs{p(\lambda_i)}}$ with $\tau_i$. Therefore,
  \begin{displaymath}
    \frac{\norm[L_\infty(a, b)]{p}}{\abs{p(\lambda_i)}} \leq \frac{ \norm[L_\infty(a, b)]{m_i} \cdot T_{M - \abs{\Xi_i} - 1}(\hat{l}(b)) }{ \abs{m_i(\lambda_i)} \cdot T_{M - \abs{\Xi_i} - 1}(\sigma_i) } = \tau_i,
  \end{displaymath}
  indicating that
  \begin{equation} \label{eq: error term of ratio of F_d(p)}
    \gamma_i(p) \leq \hat{\gamma}_i, \quad
    \delta_i(p) \leq \hat{\delta}_i, \quad
    \eta_i(p)   \leq \hat{\eta}_i.
  \end{equation}

  Substituting \cref{eq: dominant term of ratio of F_d(p),eq: error term of ratio of F_d(p)} into \cref{eq: raletive errors of F_d(p)} yields
  \begin{align*}
    \frac{\abs{F_d(p)(\lambda_j)}}{\abs{p(\lambda_i)}}
    & \leq \hat{\gamma}_i + \begin{cases}
      0, & \lambda_j \not\in [a, b], \\
      \varepsilon_i + \hat{\delta}_i, & \lambda_j \in (a, b), \\
      \frac{\varepsilon_i}{2} + \hat{\eta}_i, & \lambda_j \in \{a, b\},
    \end{cases} \quad \text{for } j \in N_i, \\
    \frac{\abs{F_d(p)(\lambda_i)}}{\abs{p(\lambda_i)}}
    & \geq \begin{cases}
      1 - \hat{\gamma}_i - \hat{\delta}_i, & \lambda_i \in (a, b), \\
      \frac{1}{2} - \hat{\gamma}_i - \hat{\eta}_i, & \lambda_i \in \{a, b\},
    \end{cases}
  \end{align*}
  that is,
  \begin{equation} \label{eq: bound of relative errors of F_d(p)}
    \max_{j \in N_i} \frac{\abs{F_d(p)(\lambda_j)}}{\abs{p(\lambda_i)}} \leq \mu_i, \quad
    \frac{\abs{F_d(p)(\lambda_i)}}{\abs{p(\lambda_i)}} \geq \nu_i.
  \end{equation}

  \emph{Case II}: $i=n_{ev}-\ell+1,\ldots,n_{ev}$.
  In this case, for $j \in N_i$, there is $j\in \{1,\ldots,i-1\}$ or $j\in \{i+\ell,\ldots, n\}$
  such that $\lambda_j \in \Xi_i$ or $\lambda_j \not\in [a, b]$.
  Take $p(t) = m_i(t)$.
  Then for $j \in N_i$, we have
  \begin{displaymath}
    h(\lambda_j) \frac{\abs{p(\lambda_j)}}{\abs{p(\lambda_i)}} = 0 = h(\lambda_j) \varepsilon_i
  \end{displaymath}
  since $p(\lambda_j) = 0$ for $\lambda_j \in \Xi_i$ and $h(\lambda_j) = 0$ for $\lambda_j \not\in [a, b]$, and
  \begin{displaymath}
    \frac{\norm[L_\infty(a, b)]{p}}{\abs{p(\lambda_i)}} = \frac{ \norm[L_\infty(a, b)]{m_i} }{ \abs{m_i(\lambda_i)} } = \kappa_i = \tau_i,
  \end{displaymath}
  which yields $\gamma_i(p) = \hat{\gamma}_i$, $\delta_i(p) = \hat{\delta}_i$ and $\eta_i(p) = \hat{\eta}_i$.
  Substitute these relations into \cref{eq: raletive errors of F_d(p)}. We obtain \cref{eq: bound of relative errors of F_d(p)} again.
\end{proof}


\begin{remark} \label{rk: param: bound controlers and raletive errors}
  The quantity $\varepsilon_i$ is an upper bound of the ratio $\abs{p(\lambda_j)/p(\lambda_i)}$ and $\tau_i$, which, together with the degree $d$, forms $\hat{\gamma}_i$, $\hat{\delta}_i$ and $\hat{\eta}_i$ and gives a bound for $\norm[L_\infty(a, b)]{p}/\abs{p(\lambda_i)}$.
  Moreover, the parameters $\hat{\gamma}_i$, $\hat{\delta}_i$ and $\hat{\eta}_i$ decide the upper bounds of $ \vert F_d(p)(t) - p(t)h(t) \vert / \vert p(\lambda_i) \vert $ for $t$ outside, inside $[a, b]$ and at the ends $a$ and $b$, respectively.
\end{remark}

\begin{remark}\label{rk: M = 1 the subspace iteration}
   This theorem considers only the case $M \geq 2$.
   For $M = 1$, we set $N_i = \{n_{ev}+1, \ldots, n\}$; \cref{eq: bound for approx Krylov bound} still holds for $\mu_i = \hat{\gamma}_i$ and $\nu_i = h(\lambda_i) - \hat{\gamma}_i$.
   In this manner, relation \cref{eq: bound for approx Krylov bound} follows from the convergence of the FEAST solver based on subspace iteration \cite{CJ-FEAST-cross}.
\end{remark}


\begin{remark} \label{rk: convergent analysis for M and d}
  We can write the ratio $\mu_i/\nu_i$ as
  \begin{displaymath}
    \frac{\mu_i}{\nu_i} = \frac{\varepsilon_i}{h(\lambda_i)} + \frac{\varepsilon_i}{h(\lambda_i)} \frac{h(\lambda_i)-\nu_i}{\nu_i} + \frac{\mu_i-\varepsilon_i}{\nu_i}.
  \end{displaymath}
  Since $\lim_{d \to \infty} \mu_i = \varepsilon_i$ and $\lim_{d \to \infty} \nu_i = h(\lambda_i)$, the first term in the above relation reduces to \cref{thm: polynomial's bound of block Krylov} as $d \to \infty$, and it tends to zero as $M$ increases.
  The second and third terms tend to zero as $d$ increases, and are the deviations caused by the CJ series expansion approximation.
  Therefore, the ratio $\mu_i/\nu_i$ decreases as $d$ increases,
  meaning faster convergence.
  However, unlike the series degree $d$, the parameter $M$ is not so free to change
  and must be of small to modest size, and actually one only requires that $M\ell\geq n_{ev}$ slightly in the
  block SS--RR method.
\end{remark}

\section{The CJ--SS--RR algorithm}
\label{sec: CJ--SS--RR algorithm}

Recall that the linear transformation \eqref{eq: linear transformation}
maps the spectral interval $[\lambda_{\min}, \lambda_{\max}]$ to $[-1, 1]$.
When using this transformation in computations, it is necessary to give reasonably good
estimates for $\lambda_{\max}$ and $\lambda_{\min}$.
As commonly suggested in a large number of literature, e.g., \cite{eig_count_Saad,spectral-estimate2011zhou}, one can run the symmetric Lanczos method on $A$ for several steps, say $30\sim 50$,
to get estimates of $\lambda_{\max}$ and $\lambda_{\min}$.
For instance, $\lambda_{\max} \approx \theta_1 + \norm{r_1}$ and $\lambda_{\min} \approx \theta_{s} - \norm{r_{s}}$, where $\theta_1$ and $\theta_s$ are the largest and smallest Ritz values
and $r_1,r_s$ are the corresponding residuals of the $s$-step Lanczos method.
In what follows, we always assume that the spectrum interval
of $A$ has been transformed into $[-1, 1]$.

\subsection{The construction of an orthonormal basis of the search subspace}
Given the search subspace $\mathcal{R}(S)$ with $S$ defined in \cref{eq: polynomial approximation of S},
to obtain approximate eigenpairs, let the columns of $U$ form an orthonormal basis of $\mathcal{R}(S)$. We
use the standard Rayleigh--Ritz projection
\begin{equation}\label{eq: Rayleigh--Ritz}
  U^T A U y = \tilde{\lambda}y, \quad \tilde{x} = Uy \mbox{\ \ with\ \ } \norm{y}=1,
\end{equation}
to obtain the approximate eigenpairs $(\tilde{\lambda}, \tilde{x})$, called
the Ritz pairs of $A$ with respect to $\mathcal{R}(S)$.
Theoretically, a commonly used approach is to compute a rank-revealing
QR decomposition
or the singular value decomposition (SVD) of $S$ to get
an orthonormal basis of $\mathcal{R}(S)$. In finite precision arithmetic, however, notice that
\begin{displaymath}
  S = (F_d(1)(A)V, \ldots, F_d(t^{M-1})(A)V) \approx (h(A)V, Ah(A)V, \ldots, A^{M-1}h(A)V)=\widehat{S}.
\end{displaymath}
The matrix $\widehat{S}$ is typically ill conditioned even for
$M$ small and its condition number increases at exponential rate with $M$ increasing
\cite{beckermann2000}, so does that of $S$. As a consequence, the subspace generated by the {\em computed}
orthonormal basis vectors of $\mathcal{R}(S)$ may be far from the true
$\mathcal{R}(S)$ \cite[pp. 373-4]{higham}.
Realize the ill conditioning of $S$. It is common in
the SS--RR type methods \cite{block_SS--RR,SS_theory2016,SS_review} to use the truncated SVD and
take those dominant right singular vectors associated
with the large singular values as a replacement of $\mathcal{R}(S)$, whose dimension
is generally at most a couple of
dozens for $\ell$ small when $\mathcal{R}(S)$ is a good
approximation to $\mathcal{R}(\widehat{S})$ and may be much lower
than the dimension $M\ell$ of $\mathcal{R}(S)$. Consequently, such approach possibly discards
too much effective information, and impairs the strength of the SS--RR method artificially and severely. Even worse, it is well possible that the dimension of effective subspace is smaller than the number $n_{ev}$ of desired eigenpairs when $n_{ev}\gg 1$, thereby
leading to the failure of the block SS--RR method when computing the $n_{ev}$ desired eigenpairs.

In order to fix this prevailing severe implementation deficiency, computationally,
it is very attempting to take new well-conditioned basis vectors of $\mathcal{R}(S)$ rather than the
ill-conditioned ones generated by the monomials $1,t,\ldots,t^{M-1}$ and then orthonormalize
them to obtain a new orthonormal basis vectors that retain all the information of $\mathcal{R}(S)$. In
finite precision arithmetic, such orthonormal basis vectors generate $\mathcal{R}(S)$ to the working
precision when the condition number of the new basis matrix is of size $O(1)$.  The Chebyshev basis vectors of $\mathcal{R}(S)$ are better choices, and they
form a better conditioned matrix. Precisely, we replace $t^k, k = 0, 1, \ldots, M-1$
by the $k$-degree Chebyshev polynomials $p_k = T_k((2t-a-b)/(b-a))$ in \eqref{eq: polynomial approximation of
S} and obtain the modified moments. Numerical experiments in \cref{subsec: condition number of S} will confirm that the new basis vectors are always numerically linearly independent
and thus retain all the information of $\mathcal{R}(S)$.


\cref{alg: search subspace} describes the construction of the new basis matrix
\begin{displaymath}
  \widetilde{S} = (F_d(p_0)(A)V, F_d(p_1)(A)V, \ldots, F_d(p_{M-1})(A)V),
\end{displaymath}
where we need to compute the Chebyshev coefficients of $p_k, k = 0, \ldots, M-1$:
\begin{equation}\label{eq: integral form of c_{k,j}}
  c_{k,j} = \frac{2}{\pi} \int_a^b \frac{p_k(t) T_j(t)}{\sqrt{1-t^2}} {\rm d} t.
\end{equation}
In computations, we compute them by the MATLAB built-in function \texttt{integral} in the MATLAB computing environment.

\begin{algorithm}
  \caption{The construction of $\widetilde{S}$}
  \label{alg: search subspace}
  \begin{algorithmic}[1]
    \REQUIRE The matrix $A$, the interval $[a,b]$, the series degree $d$, the moment number $M$, Jackson factors $\{ \rho_{j,d} \}$, Chebyshev coefficients $\{ c_{k,j} \}$, and the $n$-by-$\ell$ matrix $V$ with $M\ell \geq n_{ev}$.
    \STATE Set $V_0 = V$, $V_1 = A V_0$.
    \FOR {$k = 0, 1, \ldots, M-1$}
      \STATE $\widetilde{S}_k = \frac{1}{2} c_{k,0} V_0 + \rho_{1,d} c_{k,1} V_1$.
    \ENDFOR
    \FOR {$j = 2, 3, \ldots, d$}
      \STATE $V_j = 2AV_{j-1} - V_{j-2}$.
      \FOR {$k = 0, 1, \ldots, M-1$}
        \STATE $\widetilde{S}_k \gets \widetilde{S}_k + \rho_{j,d} c_{k,j} V_j$.
      \ENDFOR
    \ENDFOR
    \RETURN $\widetilde{S} = (\widetilde{S}_0, \widetilde{S}_1, \ldots, \widetilde{S}_{M-1})$.
  \end{algorithmic}
  \end{algorithm}

\subsection{The estimate for the number of desired eigenvalues}
By \cref{thm: SS--RR exact}, the condition $M \ell \geq n_{ev}$ is indispensable for computing
the $n_{ev}$ desired eigenpairs.
But the value of $n_{ev}$ is generally unknown in practice. Therefore, we have to estimate $n_{ev}$ in advance.

Note that
\begin{displaymath}
  \Trace(h(A)) = \Trace(X_{n_{ev}}^Th(\Lambda_{n_{ev}})X_{n_{ev}}) = n_{ev} - \frac{1}{2} \left\vert \lambda(A) \cap \{a, b\} \right\vert,
\end{displaymath}
Combine the above relation with $F_d(1)(A) \approx h(A)$, and suppose that the eigenvalues of $A$ equal to $a$ or $b$ are simple. Then
\begin{displaymath}
  n_{ev} \approx \Trace(F_d(1)(A)) + 1.
\end{displaymath}
We estimate the trace by Monte-Carlo simulation \cite{trace-estimate2011,determinants-estimate2022,hutchinson1989}.
Given $s$ random vectors $v_i$ with the components $v_{ij}$ being i.i.d. Rademacher random variables, i.e., $v_{ij} = \pm 1$ with probability $1/2$,
we estimate $n_{ev}$ using
\begin{displaymath}
  \tilde{n}_{ev} = \frac{1}{s} \sum_{i=1}^s v_i^TF_d(1)(A)v_i + 1.
\end{displaymath}
More details of the estimation can be found in \cite{CJ-FEAST-cross}.

\subsection{A practical choice of degree \texorpdfstring{$d$}{d} of the CJ series expansion}
\label{choiced}

We now investigate how to select the degree $d$ of the CJ series expansion to ensure
that $\mu_i/\nu_i < \zeta$ for a given $\zeta \in (0,1)$.
The following derivations apply to $i=1,\ldots,n_{ev}$. Therefore, we only need to consider
$i=1$, and drop the subscripts for brevity. For $M \geq 2$, \eqref{consts} means
\begin{equation}\label{i1}
  \begin{cases}
    (1+\zeta)\hat{\gamma} + \hat{\delta} + \zeta\hat{\eta} < \frac{1}{2}\zeta - \varepsilon, \quad
  (1+\zeta) (\hat{\gamma} + \hat{\eta}) < \frac{1}{2} (\zeta - \varepsilon), & \lambda_1 = b, \\
  (1 + \zeta) (\hat{\gamma} + \hat{\delta}) < \zeta - \varepsilon, \quad
  (1 + \zeta) \hat{\gamma} + \zeta \hat{\delta} + \hat{\eta} < \zeta - \frac{1}{2}\varepsilon, & \lambda_1 \neq b.
  \end{cases}
\end{equation}
From \eqref{consts}, suppose that $d$ is sufficiently large so that $\frac{1}{2} > \hat{\gamma} + \hat{\eta} > \hat{\eta}$, which ensures $\nu > 0$.
Then the expressions of $\hat\delta$ and $\hat\eta$ in \eqref{eq: param: raletive errors} mean
$$
\frac{\pi^2(M-1)^2}{(b-a)(d+2)} < \frac{1}{\tau} \leq 1 \quad \text{and thus} \quad \hat{\delta} < \hat{\eta}.
$$
The sufficient conditions for \eqref{i1} are
\begin{displaymath}
  \begin{cases}
  2 \cdot \max\{\hat{\gamma}, \hat{\eta}\} < \frac{\frac{1}{2}\zeta - \varepsilon}{1 + \zeta}, & \lambda_1 = b, \\
  2 \cdot \max\{\hat{\gamma}, \hat{\delta}\} < \frac{\zeta - \varepsilon}{1 + \zeta}, \quad
  2 \cdot \max\{\hat{\gamma}, \hat{\eta}\} < \frac{\zeta - \frac{1}{2}\varepsilon}{1 + \zeta}, & \lambda_1 \neq b.
  \end{cases}
\end{displaymath}
For $M=1$, according to \cref{rk: M = 1 the subspace iteration}, we have $2\hat{\gamma} < \frac{\zeta}{1+\zeta}$.

Substituting \cref{eq: param: raletive errors} into the above relations yields
\begin{equation}\label{eq: exact degree estimate}
  d + 2 > \begin{cases}
    \frac{\pi^2}{\Delta_{\min}^{4/3}}\sqrt[3]{(1+\zeta)/\zeta}, & M = 1, \\
    \max\left\{ \frac{\pi^2}{\Delta_{\min}^{4/3}}\sqrt[3]{\frac{1+\zeta}{\zeta/2-\varepsilon}}, \frac{\pi^2(M-1)^2}{b-a} \cdot \frac{1+\zeta}{\zeta/2-\varepsilon} \right\}, & M \geq 2, \lambda_1 = b, \\
    \max\left\{ \frac{\pi^2}{\Delta_{\min}^{4/3}}\sqrt[3]{\frac{1+\zeta}{\zeta-\varepsilon} \tau}, \frac{\pi^2(M-1)^2}{b-a} \cdot \max\{\sqrt{\frac{1+\zeta}{\zeta-\varepsilon}\tau}, \frac{1+\zeta}{\zeta-\varepsilon/2}\tau\} \right\}, & M \geq 2, \lambda_1 \neq b,
  \end{cases}
\end{equation}
where, by \cref{eq: u_d fourth moment asymptotic}, we rewrite $\hat{\gamma}$ as $\hat{\gamma} = \frac{\pi^6}{\Delta_{\min}^4(d+2)^3} \cdot \frac{\tau}{2}$, and $\tau=1$ when $M=1$ or $\lambda_1=b$.

Without a priori knowledge of the eigenvalue distribution, suppose that the desired
$\lambda_i$ are uniformly distributed in $[a, b]$.
Then
\begin{align*}
  & \Delta_{\min} \approx \min\left\{ \left\vert \arccos(a \pm \frac{b-a}{n_{ev}}) - \alpha \right\vert, \ \left\vert \arccos(b \pm \frac{b-a}{n_{ev}}) - \beta \right\vert \right\} \geq \frac{b-a}{n_{ev}}, \\
  & \varepsilon = 1 / T_{M-1} \left( \frac{n_{ev}-1+\ell}{n_{ev}-1-\ell} \right), \quad
   \tau = T_{M-1} \left( \frac{n_{ev}+1+\ell}{n_{ev}-1-\ell} \right) / T_{M-1} \left( \frac{n_{ev}-1+\ell}{n_{ev}-1-\ell} \right).
\end{align*}
Since $M\ell \geq n_{ev}$, for $M \geq 2$ we have
\begin{displaymath}
  \frac{n_{ev}-1+\ell}{n_{ev}-1-\ell} \geq \frac{n_{ev}+n_{ev}/M}{n_{ev}-n_{ev}/M} = \frac{M+1}{M-1},
\end{displaymath}
which shows
\begin{displaymath}
  \varepsilon \leq 1 / T_{M-1} \Big( \frac{M+1}{M-1} \Big) \leq \frac{1}{3}.
\end{displaymath}
Thus, setting the maximum $\zeta = 1$, we obtain a sufficient condition for \cref{eq: exact degree estimate}:
\begin{equation}\label{eq: degree estimate}
  d + 2 > \max \left\{ \frac{\pi^2n_{ev}^{4/3}}{(b-a)^{4/3}} \max\{ \sqrt[3]{12}, \sqrt[3]{3\tau} \}, \ \frac{\pi^2(M-1)^2}{b-a} \max\{ 12, \frac{12\tau}{5} \} \right\}.
\end{equation}

However, as illustrated in \cref{fig: accuracy of q_d approximating g}, the above bound, especially its first term, is overestimated; that is, the estimated degree $d$ may be too large to be feasible in practice.
We will propose a practical estimation of $d$ as follows: Choose
\begin{equation}\label{eq: degree estimate for practice}
  d = \left\lceil \frac{D\pi^2}{(b-a)^{4/3}} + \frac{\pi^2(M-1)^2}{K^2(b-a)} \right\rceil - 2,
\end{equation}
where $D \in [1, 8]$ and $K \in [1, 10]$.

\subsection{A complete algorithm}
With the algorithmic implementation details described previously, we can run the CJ--SS--RR algorithm
for given $M$ and $\ell$. If it converges, we stop the algorithm and output the converged
approximate eigenpairs. Otherwise, for $M\ell$ fixed, it is necessary to restart the algorithm
to compute better approximations until convergence.
In this paper, we use the restart approach proposed in \cite{SS_theory2016,SS_review}:
At the $k$th restart, we take $V^{(k)} = F_d(1)(A)V^{(k-1)}$, or equivalently, the first $\ell$ columns of
$\widetilde{S}^{(k-1)}$.
We use the stopping criteria in \cite{FEAST_oblique} to test convergence, and describe the
complete algorithm as \cref{alg: CJSSRR}.
The computational cost of one iteration of \cref{alg: CJSSRR} is listed in \cref{tab: computational cost}, where MVs are the number of matrix-vector products and other flops are the flops spent on the other
computations.

\begin{algorithm}
\caption{The restarted CJ--SS--RR algorithm}
\label{alg: CJSSRR}
\begin{algorithmic}[1]
  \REQUIRE The matrix $A$, the interval $[a,b]$, the series degree $d$, the moment number $M$, and an $n$-by-$\ell$ matrix $V^{(0)}$ with $M\ell \geq n_{ev}$.
  \STATE Compute the CJ coefficients by \cref{eq: Jackson damping factor,eq: integral form of c_{k,j}}.
  \FOR {$k = 1, 2, \ldots$}
    \STATE Compute $\widetilde{S}^{(k)}$ by \cref{alg: search subspace}.\label{step: S}
    \STATE Compute the QR decomposition: $\widetilde{S}^{(k)} = U^{(k)}R^{(k)}$.\label{step: QR}
    \STATE Apply the Rayleigh--Ritz projection \cref{eq: Rayleigh--Ritz} to get the Ritz pairs $(\tilde{\lambda}_i^{(k)}, \tilde{x}_i^{(k)}), i = 1, 2, \ldots, M\ell$.\label{step: RR}
    \STATE Compute the residual norms of the Ritz pairs, and test convergence.\label{step: res}
    \STATE Set $V^{(k+1)}$ to be the first $\ell$ columns of $\widetilde{S}^{(k)}$.
  \ENDFOR
  \RETURN The $n_{ev}$ converged approximate eigenpairs $(\tilde{\lambda}_i^{(k)}, \tilde{x}_i^{(k)})$.
\end{algorithmic}
\end{algorithm}

\begin{table}[ht]
  \caption{The computational cost of one iteration of \cref{alg: CJSSRR}}
  \label{tab: computational cost}
  \begin{center}
  \begin{tabular}{lll}
    \toprule
    Steps & MVs & Other flops \\
    \midrule
    \ref{step: S}  & $d \ell$ & $2d n \ell + 2d n M\ell$ \\
    \ref{step: QR} &          & $2n (M\ell)^2$ \\
    \ref{step: RR} & $M\ell$  & $4n (M\ell)^2 + 8\frac{2}{3}(M\ell)^3$ \\
    \ref{step: res} &          & $2n (M\ell)^2$ \\
    Total cost & $(\frac{d}{M}+1)M\ell$ & $2(d+\frac{d}{M}) n M\ell + 8n (M\ell)^2 + 8\frac{2}{3}(M\ell)^3$ \\
    \bottomrule
  \end{tabular}
  \end{center}
\end{table}

Let $nnz(A)$ be the number of nonzero entries in $A$ and
suppose $d>M\ell$; in fact, generally $d \gg M\ell$ in computations. Notice that $n\gg M\ell$.
Then it is seen from \cref{tab: computational cost} that one iteration
costs $O(nnz(A) d\ell) + O(dnM\ell) + O(n(M\ell)^2)$ flops, where the first two terms
consumed by Step \ref{step: S} dominate the overall cost.
As a consequence, we use the cost of Step \ref{step: S} to represent the overall cost of one iteration.
For ease of counting and comparison, we measure the computational cost in terms of MVs.
Since one matrix-vector product of $A$ costs $2nnz(A)$ flops and
\begin{equation}\label{mvsmore}
  \frac{2dn\ell + 2dnM\ell}{2nnz(A)} = \frac{(M+1)n}{nnz(A)} \cdot d \ell,
\end{equation}
the cost of the other flops in Step \ref{step: S} is approximately $ \frac{(M+1)n}{nnz(A)} d \ell $ MVs,
which are not necessarily integers.
We call them the equivalent MVs.
As a result, we measure the overall efficiency of \cref{alg: CJSSRR} by the total MVs, i.e.,
the actual MVs of one iteration plus the equivalent MVs of Step \ref{step: S}.


\section{Numerical experiments}
\label{sec: numerical experiments}

We report numerical experiments to confirm our theoretical results and illustrate the performance of
\cref{alg: CJSSRR}.
The test matrices are from the SuiteSparse Matrix Collection \cite{matrix_collection}, and we list some of
their basic properties and the intervals of interest in \cref{tab: test matrices}.
All the numerical experiments were performed on an Intel Core i7-9700, CPU 3.0GHz, 8GB RAM using MATLAB R2024b with the machine precision $\epsilon_{\rm mach} = 2.22 \times 10^{-16}$ under the Microsoft Windows 10 64-bit system.
To make a fair comparison, for each test problem and given $\ell$, we used the same starting $n \times \ell$ matrix $V^{(0)}$ for the algorithms tested,
which was generated randomly in a normal distribution. 
For reference, we use the
built-in function \texttt{eig} in MATLAB to compute $\lambda_{\min}$, $\lambda_{\max}$ and the exact $n_{ev}$ 
and obtain $\norm{A}=\max\{\abs{\lambda_{\min}}, \abs{\lambda_{\max}}\}$.

\begin{table}[ht]
  \caption{Properties of the test matrices, where $\lambda_{\min}$, $\lambda_{\max}$ and $n_{ev}$ are computed by \texttt{eig}.}
  \label{tab: test matrices}
  \centering
  \begin{tabular}{lcccccc}
    \toprule
    Matrix A          & Size  & $nnz(A)$  & $\lambda_{\min}$ & $\lambda_{\max}$ & $[a, b]$   & $n_{ev}$ \\
    \midrule
    SiH4              & 5041  & 171903  & -0.9956    & 36.79 & $[3.7, 4.7]$ & 115 \\
    SiNa              & 5743  & 198787  & -0.7042    & 25.62 & $[2, 2.8]$   & 134 \\
    delaunay\_n13     & 8192  & 49094   & -3.564     & 6.457 & $[2.4, 2.8]$ & 214 \\
    benzene           & 8219  & 242669  & -0.7297    & 58.40 & $[4, 5.1]$   & 115 \\
    stokes64          & 12546 & 140034  & -1.696e-3  & 3.998 & $[1.9, 2.1]$ & 232 \\
    Pres\_Poisson     & 14822 & 715804  &  1.278e-5  & 26.03 & $[10, 12]$   & 25  \\
    Si10H16           & 17077 & 875923  & -1.149     & 36.93 & $[3.7, 4.7]$ & 371 \\
    brainpc2          & 27607 & 179395  & -2000      & 4465  & $[-10, 10]$  & 88  \\
    rgg\_n\_2\_15\_s0 & 32768 & 320480  & -5.119     & 17.36 & [10, 11]   & 221 \\
    SiO               & 33401 & 1317655 & -1.675     & 84.32 & $[2, 4]$     & 332 \\
    \bottomrule
  \end{tabular}
\end{table}

Based on \cref{sec: CJ--SS--RR algorithm}, in the experiments, we apply the linear transformation $l(t)$ in \cref{eq: linear
transformation} to $A$ such that the spectrum interval of $l(A)$ is $[-1, 1]$ and
$[a, b]$ is mapped to $[\tilde{a}, \tilde{b}] \subset [-1, 1]$,
where $\tilde{a} = l(a)$ and $\tilde{b} = l(b)$.
An approximate eigenpair $(\tilde{\lambda}, \tilde{x})$ is claimed to have converged if its relative residual norm satisfies
\begin{equation}\label{stopcrit}
  \frac{\norm{A\tilde{x} - \tilde{\lambda}\tilde{x}}}{\norm{A}\norm{\tilde{x}}} < tol.
\end{equation}

\subsection{Condition numbers of different bases and effective dimensions of the search subspaces}
\label{subsec: condition number of S}

We first investigate the condition numbers of different kinds of matrices $S$ and effective
dimensions of $\mathcal{S}$
constructed by the CJ series approximations to the moments corresponding to different kinds of
polynomials $p_k(t), k=0, 1, \ldots, M-1$,
where the numerical ranks, i.e., the effective dimensions of $\mathcal{R}(S)$'s,
are determined by the truncated SVDs with the tolerance $10\|S\|\epsilon_{\rm mach}$.
\cref{tab: numerical rank of S,tab: condition number of S} display the condition numbers and effective subspace dimensions
obtained by using the monomial moments, the shifted-and-scaled $(\tilde{l}(t))^k$ with $\tilde{l}(t) = (2t-\tilde{a}-\tilde{b})/(\tilde{b}-\tilde{a})$ and the shifted-and-scaled Chebyshev moments for the first five matrices in \cref{tab: test matrices}
with $M = 4, 8, 16$,
where, for each test matrix, we set the true dimension $M\ell=16 \lceil 1.5 n_{ev} / 16 \rceil$  of $\mathcal{R}(S)$, and determined $\ell$ by the equations $M\ell=16 \lceil 1.5 n_{ev} / 16 \rceil$. We took the series degree $d$ in \cref{eq: degree estimate for practice} as $d = \lceil \pi^2 (b-a)^{-4/3} + \pi^2 (b-a)^{-1} \rceil$.

As is seen from the tables,
the condition numbers of $S$'s using the monomial moments are already bigger than
$O(1/\sqrt{\epsilon_{\rm mach}})$ even for $M=4$,
and for $M=8$ these $S$ are already numerically rank deficient.
For $M=16$,
the effective search subspaces obtained by the truncated SVD's
of $S$ loses too much information on $\mathcal{R}(S)$ and effective dimensions of $\mathcal{R}(S)$'s
are already smaller than $n_{ev}$'s, causing that \cref{alg: CJSSRR} cannot compute
the $n_{ev}$ desired eigenpairs of $A$.

In contrast, the situation changes very substantially when using the shifted-and-scaled moments,
and the condition numbers are reduced greatly. However, it is still
far from good enough to solve the concerning
eigenvalue problems because the condition numbers are around $10^{10}$ for $M=16$ and the
errors of the
computed subspaces by the rank-revealing QR decompositions and the true subspaces
$\mathcal{R}(S)$ are the condition numbers times $\epsilon_{\rm mach}$ and thus around $10^{-5}$.

Strikingly, the shifted-and-scaled Chebyshev moments is very superior to the shifted-and-scaled moments,
and the condition numbers are further significantly reduced and are around
$10^5$, approximately the square roots of the
latter ones for $M=16$. As a result, the computed subspaces are much more accurate with
the errors $10^{-10}$, so that we can compute the desired eigenpairs much more accurately.
For better stability and accuracy, we always use
the Chebyshev polynomials $T_k(\tilde{l}(t))$ to generate $S$ in the subsequent experiments.

\begin{table}[ht]
  \caption{The condition numbers of the moment matrices generated by different polynomials $p_k(t)$.}
  \label{tab: condition number of S}
  \centering
  \begin{tabular}{*{7}{c}}
    \toprule
    \multirow{2}{*}{$M$} & \multirow{2}{*}{$p_k(t)$} & \multicolumn{5}{c}{The matrices} \\ \cline{3-7}
    &                           & SiH4            & SiNa            & delaunay\_n13   & benzene         & stokes64        \\ \midrule
    \multirow{3}{*}{4}  & $t^k$ & 2.5e+10         & 2.1e+10         & 3.6e+8          & 1.9e+11         & 5.6e+7          \\
    &      $( \tilde{l}(t) )^k$ & 1.8e+5          & 2.0e+5          & 2.6e+4          & 3.4e+5          & 9.5e+3          \\
    &     $T_k( \tilde{l}(t) )$ & \textbf{5.7e+4} & \textbf{5.3e+4} & \textbf{7.8e+3} & \textbf{1.2e+5} & \textbf{3.4e+3} \\ \midrule
    \multirow{3}{*}{8}  & $t^k$ & 7.0e+15         & 6.0e+15         & 5.7e+15         & 4.4e+15         & 2.1e+14         \\
    &      $( \tilde{l}(t) )^k$ & 7.8e+6          & 6.6e+6          & 9.5e+5          & 2.0e+7          & 3.6e+5          \\
    &     $T_k( \tilde{l}(t) )$ & \textbf{1.2e+5} & \textbf{1.1e+5} & \textbf{1.2e+4} & \textbf{3.3e+5} & \textbf{5.3e+3} \\ \midrule
    \multirow{3}{*}{16} & $t^k$ & 6.6e+16         & 5.1e+16         & 8.1e+22         & 2.0e+16         & 7.0e+28         \\
    &      $( \tilde{l}(t) )^k$ & 7.7e+9          & 8.6e+9          & 6.7e+9          & 2.6e+10         & 1.6e+10         \\
    &     $T_k( \tilde{l}(t) )$ & \textbf{1.8e+5} & \textbf{2.0e+5} & \textbf{1.8e+5} & \textbf{5.4e+5} & \textbf{3.8e+5} \\
    \bottomrule
  \end{tabular}
\end{table}

\begin{table}[ht]
  \caption{The numerical ranks of the moment matrices generated by different polynomials $p_k(t)$,
  where the boldface numbers denote numerical rank deficiency.}
  \label{tab: numerical rank of S}
  \centering
  \begin{tabular}{*{7}{c}}
    \toprule
    \multirow{2}{*}{$M$} & \multirow{2}{*}{$p_k(t)$} & \multicolumn{5}{c}{The matrices} \\ \cline{3-7}
    &                            & SiH4         & SiNa         & delaunay\_n13 & benzene      & stokes64     \\ \midrule
    \multirow{3}{*}{4}  & $t^k$  & 176          & 208          & 336           & 176          & 352          \\
    &      $( \tilde{l}(t) )^k$  & 176          & 208          & 336           & 176          & 352          \\
    &     $T_k( \tilde{l}(t) )$  & 176          & 208          & 336           & 176          & 352          \\ \midrule

    \multirow{3}{*}{8}  & $t^k$  & \textbf{134} & \textbf{163} & \textbf{296}  & \textbf{132} & \textbf{344} \\
    &      $( \tilde{l}(t) )^k$  & 176          & 208          & 336           & 176          & 352          \\
    &     $T_k( \tilde{l}(t) )$  & 176          & 208          & 336           & 176          & 352          \\ \midrule

    \multirow{3}{*}{16} & $t^k$  & \textbf{88}  & \textbf{104} & \textbf{168}  & \textbf{79}  & \textbf{176} \\
    &      $( \tilde{l}(t) )^k$  & 176          & 208          & 336           & 176          & 352          \\
    &     $T_k( \tilde{l}(t) )$  & 176          & 208          & 336           & 176          & 352          \\ \midrule
    \multicolumn{2}{l}{$M \ell$} & 176          & 208          & 336           & 176          & 352          \\
    \multicolumn{2}{l}{$n_{ev}$} & 115          & 134          & 214           & 115          & 232          \\
    \bottomrule
  \end{tabular}
\end{table}

\subsection{The performance of the CJ--SS--RR algorithm}

We test the matrix stokes64
for $M = 1, 2, 4, 8, 16$ by fixing the subspace dimension $M \ell = 16 \lceil 1.5 n_{ev} / 16 \rceil$ and using the stopping tolerance $tol = 10^{-10}$ in \eqref{stopcrit}. 
In the experiments, two options are available for the series degree $d$.
The first is to use the constant degree $d = \lceil \pi^2 (b-a)^{-4/3} + \pi^2 (b-a)^{-1} \rceil$ for all
the moment matrices, and the second is to use the changing series
degree $d$ determined by \cref{eq: degree estimate for practice} with $D = 1$ and $K = 10$ so that the
moment matrices are generated using the CJ series expansions of the changing degree $d$.
The number of restarts denoted by Iter, the maximum relative residuals and the total MVs are listed in \cref{tab: CJ--SS--RR on stokes64}.


\begin{table}[ht]
  \caption{The performance of \cref{alg: CJSSRR} on the test matrix stokes64.}
  \label{tab: CJ--SS--RR on stokes64}
  \centering
  \begin{tabular}{*{10}{c}}
    \toprule
    \multirow{2}{*}{$M$} & \multicolumn{4}{c}{Using the constant $d$} &
    & \multicolumn{4}{c}{Using the changing $d$} \\ \cline{2-5} \cline{7-10}
       & $d$ & Iter & Residual & Total MVs & & $d$ & Iter & Residual & Total MVs \\
    \midrule
    1  & 310 & 5     & 1.34e-12 & 642345.0  & & 211 & 8     & 3.00e-11 & 700433.8  \\
    2  & 310 & 5     & 1.49e-12 & 345798.8  & & 212 & 8     & 1.71e-11 & 379261.0  \\
    4  & 310 & 5     & 9.70e-13 & 197525.7  & & 220 & 8     & 1.88e-11 & 225104.7  \\
    8  & 310 & 5     & 2.84e-12 & 123389.1  & & 259 & 7     & 3.30e-12 & 144730.8  \\
    16 & 310 & 12    & 6.92e-11 & 207169.9  & & 433 & 5     & 3.73e-12 & 119872.4  \\
    \bottomrule
  \end{tabular}
\end{table}

Relation \cref{eq: degree estimate for practice} indicates that the degree $d$ has to increase quadratically with $M$
so as to ensure a similar convergence.
Consequently, the fixed constant degree $d$ may slow down convergence when $M>1$ considerably.
The results in \cref{tab: CJ--SS--RR on stokes64}
confirm the considerable slowdown of convergence of the constant $d$ for $M=16$.
The results also demonstrate that for a given $M$, a higher degree $d$ leads to a faster convergence, and vice versa, which is consistent with \cref{rk: convergent analysis for M and d}.

\Cref{tab: CJ--SS--RR on stokes64}
illustrate that the total MVs used by \cref{alg: CJSSRR} are notably fewer for $M = 2$ than for $M = 1$.
On the contrary, the total MVs for $M = 8$ and $M = 16$ are comparable.
This can be explained by the computational cost list in \cref{tab: computational cost}, where,
for relatively large $M$, the reduction in total MVs due to the increase in $M$ is offset by increasing the degree $d$.
Since $d$ increases quadratically with $M$, the total MVs of an excessively large $M$ may instead be bigger.
We thus suggest that a modest value of $M$, e.g.,  $4\sim 8$, be used.

\subsection{A comparison of the CJ--SS--RR algorithm and the SS--RR algorithm}

In this subsection we compare \cref{alg: CJSSRR} with the block SS--RR method \cite{SS_package,block_SS--RR}.
Here, for a given interval $[a, b]$ of interest, the contour is a circle with the center $(a+b)/2$ and radius $(b-a)/2$.
The numerical quadrature rule used is the trapezoidal rule with sixteen nodes,
which are default parameters in \cite{SS_package}.
Notice that $A$ is symmetric and the quadrature nodes are symmetric with respect to the real axis, the block SS--RR method manages to solve only $M\ell \cdot\frac{q}{2}$ linear systems at each iteration, where $q$ is the number of quadrature nodes.
The shifted linear systems are solved by the MATLAB built-in function
\texttt{minres} with the stopping tolerance $10^{-12}$.

We tested the two algorithms for $M = 4$ and $8$ by taking
$M\ell = 8 \lceil 1.5 n_{ev} / 8 \rceil$ to ensure that $M\ell \geq 1.5 n_{ev}$.
The convergence tolerance in \eqref{stopcrit} is $tol = 10^{-12}$.
In \cref{alg: CJSSRR}, we chose the degree $d$ using \cref{eq: degree estimate for practice}
with $D = 2$ and $K = 7$.
Since the generally dominating cost of the contour-based block SS--RR method is iterative solutions of
shifted linear systems, we only record the total MVs with $A$ used by \texttt{minres}, which means that 
we ignore other costs of the contour-based block SS--RR method when comparing the overall efficiency.

\begin{table}[ht]
  \caption{The results of block SS--RR and \cref{alg: CJSSRR} with $tol = 10^{-12}$.
  The stopping tolerance of \texttt{minres} in block SS--RR is $10^{-12}$.
  Speedup ratios are the ratios of MVs used by \cref{alg: CJSSRR} and those used by
   \texttt{minres}.}
  \label{tab: comparison with SS--RR}
  \centering
  \begin{tabular}{l*{6}{c}rr}
    \toprule
    \multirow{3}{*}{Matrix} & \multicolumn{5}{c}{The total MVs} & & \multicolumn{2}{c}{\multirow{2}{*}{Speedup ratios}} \\\cline{2-6}
    & \multicolumn{2}{c}{SS--RR} & & \multicolumn{2}{c}{CJ--SS--RR} &  & \multicolumn{2}{c}{} \\
    \cline{2-3} \cline{5-6} \cline{8-9}
                      & $M = 4$ & $M = 8$ & & $M = 4$ & $M = 8$ & & \multicolumn{1}{c}{$M=4$} & \multicolumn{1}{c}{$M=8$} \\
    \midrule
    SiH4              & 1.1e+6  & 7.6e+5  & & 2.1e+5  & 9.9e+4  & &  5.1    &  7.6    \\
    SiNa              & 3.2e+6  & 1.7e+6  & & 2.0e+5  & 1.3e+5  & & 15.9    & 12.7    \\
    delaunay\_n13     & 1.3e+7  & 6.7e+6  & & 5.0e+5  & 3.3e+5  & & 26.1    & 20.3    \\
    benzene           & 2.5e+6  & 1.8e+6  & & 3.4e+5  & 2.1e+5  & &  7.3    &  8.3    \\
    stokes64          & 8.0e+6  & 4.1e+6  & & 2.8e+5  & 2.1e+5  & & 28.5    & 19.7    \\
    Pres\_Poisson     & 7.7e+4  & 3.9e+4  & & 1.4e+4  & 7.3e+3  & &  5.5    &  5.3    \\
    Si10H16           & 2.2e+7  & 1.1e+7  & & 6.4e+5  & 3.9e+5  & & 33.8    & 27.5    \\
    brainpc2          & 1.6e+7  & 1.1e+7  & & 3.1e+6  & 2.2e+6  & &  5.0    &  5.0    \\
    rgg\_n\_2\_15\_s0 & 9.0e+6  & 5.8e+6  & & 3.2e+5  & 2.4e+5  & & 28.1    & 24.6    \\
    SiO               & 9.8e+6  & 4.9e+6  & & 6.9e+5  & 4.3e+5  & & 14.2    & 11.4    \\
    \bottomrule
  \end{tabular}
\end{table}

We record the total MVs of both algorithms and the speedup ratios of \cref{alg: CJSSRR} over
the block SS--RR method in \cref{tab: comparison with SS--RR}.
As we see from \cref{tab: comparison with SS--RR}, all the speedup ratios are no less than 5, indicating that the CJ--SS--RR algorithm is significantly more efficient than the block SS--RR method.
For Pres\_Poisson, only 47 of the total 14822 eigenvalues are on the right of the interval of interest,
which means that the desired eigenvalues are near to the largest ones and the shifted linear systems
are slightly indefinite, so that MINRES has relatively quick convergence.
In this case, the speedup ratios are relatively not so substantial, though already quite considerable.
However, for stokes64, Si10H16 and rgg\_n\_2\_15\_s0, where the intervals of interest correspond to the interior eigenvalues, the speedup ratios of the total MVs are significantly big.
These indicate that when the interval of interest corresponds to interior eigenvalues, the high
indefiniteness of the coefficient matrices of the shifted linear systems in the block SS--RR method leads to much more slow convergence of MINRES.

\section{Conclusion}
\label{sec: conclusion}

We have studied the problem of approximating the piecewise continuous function $p(t)h(t)$ by the CJ
series expansion and proved its pointwise convergence to the function $p(t)h(t)$, showing that
the convergence rates critically depend on the point positions and the degree of $p(t)$.
Making use of these results, we have extended the theoretical results of the block Krylov subspace
to the subspace constructed by the CJ series expansion.
Based on them, we have developed a practical restart CJ--SS--RR
algorithm, where we have replaced
ill-conditioned basis vectors of the search subspace generated by the monomial moments by
the much better conditioned Chebyshev basis vectors, which
enable us to make full of the search subspace and improve the robustness of
the SS--RR method and computational accuracy very greatly.
In the meantime, we have proposed a practical selection strategy of the series degree $d$.

We have numerically tested our algorithm on several problems to confirm the theoretical results and analysis, and shown that the CJ--SS--RR algorithm is effective and efficient.
The experiments have demonstrated that the CJ--SS--RR algorithm outperforms the block SS--RR method
and often improve the overall efficiency dozens of times in terms of the total MVs, especially when the interval of interest corresponds to interior eigenvalues. In summary, the developed CJ--SS--RR algorithm is a much more
efficient and robust alternative of the block SS--RR method when computing the eigenpairs of large sparse real symmetric matrices.

\section*{Declarations}
The authors declare that they have no financial interests,
and they read and approved the final manuscript.
The algorithmic Matlab code is available upon reasonable
request from the corresponding author.


\end{document}